\newtheorem{propo}{Proposition}
\newtheorem{corol}{Corollary}
\newtheorem{defin}{Definition}
\newtheorem{theor}{Theorem}
\newtheorem{conje}{Conjecture}
\newtheorem{remar}{Remark}
\title{A Complete Congruence System for the Erdos-Straus Conjecture}
\author[$\dagger$]{Miguel Angel Lopez}
\affil[$\dagger$]{Complutense University of Madrid,
Plaza de Ciencias 3,
Madrid 28040,
Spain}
\date{}
\begin{document}

\maketitle

\begin{center}email: migulo23@ucm.es, dagon.magnus@gmail.com\end{center}

\begin{abstract}
    Abstract: In this paper we attack the Erdos-Straus conjecture by means of the structure of its solutions, extending and improving the results of a previous paper. Using previous results and supported by the works of Elsholtz and Tao and Monks and Velingker we define a system of congruences for which there are always solutions to the Erdos-Straus conjecture and which we conjecture to include all prime numbers. For this purpose, and always taking into account a result due to Mordell that limits the congruences admitting polynomial identities to those that are not quadratic residues, we will adopt a transversal approach and classify the solutions by their form and not by those congruences that produce them. Thus we define two new types of solutions, which we call Type A and B, and relate them to the already known Type II solutions and study their properties. Finally we conjecture that every prime number has at least one solution of Type A or B and we associate a congruence and a general polynomial to each Type of solution.
\end{abstract}

\textbf{Keywords:} Erdos-Straus, diophantine equation, unit fraction, egyptian fraction, congruences, integral solution, prime numbers, quadratic residues

\textbf{Mathematics Subject Classification:} 11D72, 11A07, 11A41

\section{Introduction}

In 1948, Paul Erdös and Ernst G. Straus formulated a conjecture that states the following: the equation $$\frac{4}{n}=\frac{1}{x}+\frac{1}{y}+\frac{1}{z}$$ has at least one solution where $x,$ $y,$ and $z$ are positive integers. There are many modular identities that solve this equation, for example for the case $n\equiv 2 \pmod3$ we can use the following expression: $$\frac{4}{n}=\frac{1}{n}+\frac{1}{(n+1)/3}+\frac{1}{n(n+1)/3}$$ L. J. Mordell studied this equation among many others in \cite{mordell_diophantine_1970} and derived identities for the cases $p\equiv 3 \pmod4$, $2$ or $3\pmod5$, 5 or $6\pmod7$ and $5\pmod8$. These combined identities solve all cases except those where $p$ is congruent to 1, 121, 169, 289, 361 or $529\pmod{840}$. Mordell himself wondered whether it might not be possible to find a sufficient number of identities such that all possible cases would be completely covered. This possibility was limited, however, by his discovery that if an identity exists for a set of values $p\equiv r \pmod{q}$ then $r$ cannot be a quadratic residue module $q$. For example, no such identity can exist for values of $p$ congruent to $1 \pmod{q}$  since 1 is always a quadratic residue module $q$ for any natural value of $q$. This implies, in fact, that it is not possible to find a value $q$ such that identities can be found for all elements of $\mathbb{Z}_q$.
\\ \\
The conjecture has been verified up to value $10^7$ by Yamamoto \cite{yamamoto_diophantine_1965} and $10^{14}$ by Swett \cite{salez_erdos-straus_2014}. Webb and others have shown that the natural density of possible counterexamples to the conjecture is zero, for as $N$ tends to infinity, the number of values in the interval $[1,N]$ that could be counterexamples tends to zero.
\\ \\
In \cite{lopez_structure_2022} we showed that, if $p=4k+1$, there exists a solution for $p$ such that $$\frac{4}{p}=\frac{1}{du}+\frac{1}{dv}+\frac{1}{duv}$$ if and only if there exists $t\geq0$  and a divisor $w$ of $k+1+t$ such that $w\equiv -1 \pmod{3+4t}$. In particular, if there exists a divisor $w$ of $k+1$ congruent to 2 module 3, $p$ has the previously mentioned solution. This result is powerful even in the particular case where $t=0$ and in fact only by employing this particular case all cases are ruled out except those where $p\equiv 1 \pmod{24}$ and also works in many other cases within this congruence. That the problem cannot be solved completely with this result is logical not only because, as Mordell rightly points out, 1 is quadratic residue modulo 24, but also because this congruence contains the perfect square of every prime number greater than 3, with repercussions that we will explain later.

\section{Main body}

As in the previous article, we will establish a similar notation. We will consider the values of each possible solution in an increasing order, i.e. $x \leq y \leq z$. We will write a solution for each value $a \in \mathbb{N}$  as  what will allow us to list them in a more condensed form. We will use the following definition:

\begin{defin}
We say that a number $a \in \mathbb{N}$ is Egyptian of order 3 if there exists a triplet of the form $(x,y,z)$ such that the Erdos-Straus conjecture is fulfilled for the fraction $\frac{4}{a}$.    
\end{defin}

We will also use the following definition, used by Bradford in \cite{bradford_note_2020} and by Elsholtz and Tao in \cite{elsholtz_counting_2015}:
\begin{defin}
We will say that a solution $(x,y,z)$ is of Type I if $gcd(a,x)=gcd(a,y)=1$ and $gcd(a,z)=a$ while a solution will be of Type II if $gcd(a,y)=gcd(a,z)=a$ and $gcd(a,x)=1$.    
\end{defin}

It is well known that prime numbers can only have these two types of solutions. We will also point out that, contrary to the previous article, in some cases we will consider the value a to be studied as a composite, and when it is prime we will use the letter $p$. We will begin by recalling the result of the previous article that motivates the new approach to the conjecture, and it is the following.

\begin{theor}
\label{T1}
Let be $p \in \mathbb{N}$ prime. Suppose that $p=4k+1$, there exists a solution for $p$ with the form $(du,dv,duv)$ with $d,u,v \in \mathbb{N}$ if and only if there exists $t \geq 0$ and a divisor $w$ of $k+1+t$ such that $w \equiv -1 \pmod{3+4t}$. In particular, if there exists a divisor $w$ of $k+1$ congruent to 2 module 3, $p$ is Egyptian of order 3 with a solution of the form $(du,dv,duv)$.
\end{theor}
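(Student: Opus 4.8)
The plan is to reduce the stated equivalence to a single elementary Diophantine equation and then read the congruence condition directly off its integrality constraints. Clearing denominators in $\frac{4}{p}=\frac{1}{du}+\frac{1}{dv}+\frac{1}{duv}$ and using $\frac{1}{du}+\frac{1}{dv}+\frac{1}{duv}=\frac{u+v+1}{duv}$, the existence of a solution of the prescribed form is equivalent to solving $4duv=p(u+v+1)$ in positive integers $d,u,v$. I would emphasise that this equation is symmetric in $u$ and $v$, which will let me fix, without loss of generality, which of the two carries the factor $p$.

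For the forward direction I would first pin down the divisibility of $p$. Since $p$ is prime and $p\nmid 4$, the relation $4duv=p(u+v+1)$ forces $p\mid duv$. A short size estimate rules out $p\mid d$: writing $d=pd'$ reduces the equation to $4d'uv=u+v+1$, which is impossible because $4uv>u+v+1$ for all $u,v\geq 1$. One also rules out $p$ dividing both $u$ and $v$, since then $p^2\mid 4duv=p(u+v+1)$ would give $p\mid u+v+1$ and hence $p\mid 1$. Thus $p$ divides exactly one of $u,v$; by symmetry assume $p\mid v$ and write $v=pv'$. Substituting and cancelling one factor of $p$ yields $v'(4du-p)=u+1$. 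Setting $D:=4du-p$, the identity $p=4k+1$ gives $D\equiv -1\equiv 3\pmod 4$ and $D>0$, so $D=3+4t$ for some integer $t\geq 0$. Moreover $4du=p+D=4(k+1+t)$ gives $d=(k+1+t)/u$, so $u\mid k+1+t$, while $v'=(u+1)/D$ being a positive integer gives $D\mid u+1$, i.e. $u\equiv -1\pmod{3+4t}$. Taking $w:=u$ produces exactly the required $t$ and divisor $w$ of $k+1+t$.

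For the converse I would simply reverse this computation as an explicit construction. Given $t\geq 0$ and a divisor $w$ of $k+1+t$ with $w\equiv -1\pmod{3+4t}$, I set $u:=w$, $d:=(k+1+t)/w$, $v':=(w+1)/(3+4t)$ and $v:=pv'$; the two divisibility hypotheses guarantee that $d$ and $v'$ are positive integers. A direct check, using $4(k+1+t)-p=3+4t$, shows $v'(4du-p)=w+1$ and hence $4duv=p(u+v+1)$, so $(du,dv,duv)$ is a genuine solution. The ``in particular'' statement is then just the special case $t=0$: a divisor $w$ of $k+1$ with $w\equiv -1\equiv 2\pmod 3$ satisfies the hypothesis with $3+4t=3$.

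I expect the only delicate point to be the divisibility bookkeeping in the forward direction --- specifically, the size argument excluding $p\mid d$ and the exclusion of $p$ dividing both $u$ and $v$, which together guarantee that exactly one of $u,v$ absorbs the factor $p$ and thereby legitimise the reduction to $v'(4du-p)=u+1$. Once that step is secured, everything else is a transparent rewriting in terms of $t=(D-3)/4$ and $w=u$, and the two directions mirror each other exactly.
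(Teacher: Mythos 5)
Your proof is correct, and your converse direction reproduces exactly the construction the paper records: your $u=w$, $d=(k+1+t)/w$, $v'=(w+1)/(3+4t)$, $v=pv'$ is the paper's $d=\frac{k+1+t}{w}$, $n=\frac{w+1}{3+4t}$, $v=np$, $u=\frac{1+np}{4dn-1}$, since your relation $u(4dn-1)=np+1$ shows the two descriptions of $u$ agree --- with the small advantage that in your setup the integrality of $u$ is trivial rather than something to verify by substitution. The organizational difference is that the paper does not prove Theorem~\ref{T1} in this document at all: it defers to the earlier paper, where (as stated just before Theorem~\ref{T7}) the result is obtained by first establishing the congruence characterization of Type A solutions, $p\equiv -4d \pmod{4dn-1}$, and then translating that congruence into the divisor condition. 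You bypass that intermediate lemma and go directly from $4duv=p(u+v+1)$ to the divisor condition; your key step $v'(4du-p)=u+1$, equivalently $np\equiv -1 \pmod{4dn-1}$, recovers the Theorem~\ref{T7} congruence upon multiplying by $4d$ and using $4dn\equiv 1 \pmod{4dn-1}$. Likewise, where the paper (in the proof of Theorem~\ref{T2}) invokes Monks--Velingker to rule out $p$ dividing the small coordinates, you supply a self-contained size argument ($4d'uv>u+v+1$) plus the exclusion of $p\mid\gcd(u,v)$, both of which are sound. In short: the same essential computation, but your write-up is complete and self-contained where the paper's is deferred, at the cost of not isolating the congruence characterization that the paper later reuses for its main conjecture.
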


We will not prove this result, for anyone interested in seeing its proof may consult \cite{lopez_structure_2022}. The proof is also constructive in an implicit way; following its steps, it follows that $$d=\frac{k+1+t}{w},n=\frac{w+1}{3+4t},u=\frac{1+np}{4dn-1},v=np$$ All these numbers are natural in a trivial way, without more than applying the hypotheses, except in the case of $u$, for which it is simple to verify it by substituting the values to arrive at $1+np \equiv 0 \pmod{4dn-1}$. It can be verified easily also that $\frac{4}{p}=\frac{1}{du}+\frac{1}{dv}+\frac{1}{duv}$ because it is a mere algebraic exercise.
\\ \\
It is also evident, without more than performing a modular arithmetic calculation that, for the case $t=0$, if $k+1$ has a divisor $w$ congruent to 2 modulo 3, then either $w$ itself is a prime number or there exists another divisor of $k+1$ congruent to 2 modulo 3 that is, so we can restrict our search for valid $w$ divisors strictly to prime candidates.
\\ \\
As a result of the proof, in addition, the paper proved that for any value of $t$ it is satisfied that $gcd(u,v)=1$. We can also come to a quick conclusion as to whether it is one of the basic types of solution.
\\ \\
\begin{theor}
\label{T2}
Let be $p \in \mathbb{N}$ prime of the form $p=4k+1$, if $p$ has a solution $(du,dv,duv)$, satisfies that this solution is of Type II.    
\end{theor}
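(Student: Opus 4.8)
The plan is to clear denominators and reduce the whole question to counting how many of the three denominators are divisible by $p$. Starting from $\frac{4}{p} = \frac{1}{du} + \frac{1}{dv} + \frac{1}{duv}$ and multiplying through by $duv$, I would obtain the single relation
$$4duv = p(u + v + 1).$$
Since $p = 4k+1$ is odd, $\gcd(p,4) = 1$, and this relation forces $p \mid duv$. As $p$ is prime, $p$ must divide at least one of $d$, $u$, $v$.

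Next I would establish the basic obstruction that no more than two of the denominators can be multiples of $p$. If all three of $du, dv, duv$ were divisible by $p$, writing them as $pa$, $pb$, $pc$ would give $\frac{1}{a} + \frac{1}{b} + \frac{1}{c} = 4$, which is impossible because a sum of three unit fractions is at most $3$. This single observation rules out $p \mid d$ (which would make all three denominators divisible) and simultaneously rules out $p \mid u$ and $p \mid v$ both holding (which would again make all three divisible). Hence $p \nmid d$, and exactly one of $u$, $v$ is divisible by $p$.

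From here the classification is forced. Exactly one of $u, v$ being divisible by $p$ means that exactly two of the three products $du, dv, duv$ are divisible by $p$ and exactly one is coprime to $p$; one checks this directly in the two cases $p\mid u,\ p\nmid v$ and $p\mid v,\ p\nmid u$, recalling $p\nmid d$. To match the definition of Type II I would then identify the coprime denominator as the smallest one: with the ordering $x \le y \le z$, the inequality $\frac{4}{p} \le \frac{3}{x}$ gives $x \le \frac{3p}{4} < p$, so the smallest denominator is never a multiple of $p$ and is therefore the unique denominator coprime to $p$. Consequently the two larger denominators $y$ and $z$ are precisely the two divisible by $p$, which is exactly the condition $\gcd(p,y) = \gcd(p,z) = p$ and $\gcd(p,x) = 1$ defining a Type II solution.

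The main obstacle I anticipate is bookkeeping rather than conceptual: one must apply the ``at most two divisible'' argument carefully in each case, and the passage from ``exactly two denominators divisible by $p$'' to the ordered $\gcd$ conditions of the definition must be justified by the bound $x < p$ on the smallest denominator. Everything else follows directly from the single cleared equation together with the primality of $p$.
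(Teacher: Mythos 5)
Your proof is correct, and it takes a route that is self-contained where the paper's is not. The paper's own argument follows the same skeleton---clear denominators to obtain $4duv=p(1+u+v)$---but at the crucial step it cites Monks and Velingker for the fact that $p$ cannot divide the smallest denominator $du$ (assuming $u\le v$), and then reads $p\mid 4v$, hence $p\mid v$, directly off the cleared equation, which immediately makes $dv$ and $duv$ multiples of $p$. You instead prove that key ingredient from scratch: your bound $\frac{4}{p}\le\frac{3}{x}$ gives $x\le\frac{3p}{4}<p$, which is precisely the classical argument underlying the cited fact, and your preliminary observation that a sum of three unit fractions is at most $3$ (so at most two of the three denominators can be multiples of $p$) cleanly rules out $p\mid d$ and the case where $p$ divides both $u$ and $v$. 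What the paper's route buys is brevity: once $p\nmid du$ is granted, $p\mid v$ follows in one line and the Type II pattern is immediate. What your route buys is independence from the literature and slightly more robust bookkeeping: you never need to identify which of $u$, $v$ carries the factor of $p$, only that exactly one does, and the ordering bound $x<p$ then forces the $\gcd$ conditions of the Type II definition in either case. Both arguments use the same parity remark (that $p=4k+1$ is odd) to strip the factor $4$, just at different points.
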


\begin{proof}
    Suppose without loss of generality that $u \leq v$. Monks and Velingker prove in \cite{monks_erdos-straus_2007} that necessarily, since $du \leq dv$, $p$ does not divide $du$. Since we know that $$\frac{4}{p}=\frac{1}{du}+\frac{1}{dv}+\frac{1}{duv}$$ and therefore, $$4duv=p(1+u+v)$$ we have then that $p \mid 4v$, and since $p$ is always odd, being of the form $p=4k+1$, then $p \mid v$. This automatically implies that the solution $(du,dv,duv)$ is of Type II.
\end{proof}

This reasoning can be replicated without variations for the case $p=4k+3$ and it is trivial to observe that the condition is also fulfilled for the only solution of the case $p=2$. The first limitation regarding this solution is that it can never be assumed with these same properties for any value that lacks Type II solutions, and such values, as Elsholtz and Tao already commented, exist.
\\ \\
At this point, we will name these types of solutions.

\begin{defin}
We say that a value $a \in \mathbb{N}$ has a solution of Type A if there exist $d,u,v \in \mathbb{N}$ such that $a$ is Egyptian of order 3 with a solution $(du,dv,duv)$. This solution is of Type II if $a$ is prime.
\end{defin}

In the previous article it was shown that any prime number $p$ such that $p\not\equiv 1 \pmod{24}$ always has a solution of Type A. For this, it was sufficient to use (\ref{T1}) and the particular case where $t=0$. It is possible to keep digging in this direction until one finds huge lists of congruences with Type A solutions associated, since it has not even been used in particular cases other than the value of $t=0$. There are many papers that develop large numbers of identities, but that approach has many limitations. It is interesting to note that the congruence $p \equiv 1 \pmod{24}$ includes the values 25,49,121,169,289\dots All of them are perfect squares, which is no coincidence.

\begin{theor}
\label{T3}
Let $n$ be odd such that $n\geq5$, $3 \nmid n$.Then $n^2 \equiv 1 \pmod{24}$. In particular, the square of every prime number greater than 3 belongs to this congruence.    
\end{theor}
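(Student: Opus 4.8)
The plan is to show directly that $n^2 - 1$ is divisible by $24$, which is equivalent to the claim $n^2 \equiv 1 \pmod{24}$. Since $24 = 8 \cdot 3$ and $\gcd(8,3)=1$, by the Chinese Remainder Theorem it suffices to prove separately that $8 \mid n^2 - 1$ and $3 \mid n^2 - 1$, and then combine. I would factor $n^2 - 1 = (n-1)(n+1)$ throughout, since this factorization is what makes both divisibility statements transparent.

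First I would handle the factor of $8$. Since $n$ is odd, both $n-1$ and $n+1$ are even, so their product is divisible by $4$ immediately. The extra factor of $2$ comes from observing that $n-1$ and $n+1$ are two consecutive even numbers, so exactly one of them is divisible by $4$; hence one factor contributes a $2$ and the other contributes a $4$, giving $8 \mid (n-1)(n+1)$. Equivalently one can note that the product of two consecutive even integers is always divisible by $8$. Next I would handle the factor of $3$: the hypothesis $3 \nmid n$ means $n \equiv 1$ or $n \equiv 2 \pmod 3$, so one of $n-1$, $n+1$ is divisible by $3$, giving $3 \mid (n-1)(n+1)$.

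Having established $8 \mid n^2-1$ and $3 \mid n^2-1$ with $\gcd(8,3)=1$, I would conclude $24 \mid n^2 - 1$, i.e. $n^2 \equiv 1 \pmod{24}$. Note that the hypotheses $n \geq 5$, $n$ odd, and $3 \nmid n$ are exactly what is needed: the oddness drives the power of $2$ and the coprimality to $3$ drives the factor of $3$; the bound $n \geq 5$ merely ensures we are in the regime where these apply nontrivially. For the ``in particular'' clause, every prime $p > 3$ is automatically odd and not divisible by $3$, and satisfies $p \geq 5$, so it falls under the general statement and hence $p^2 \equiv 1 \pmod{24}$.

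Honestly, there is no real obstacle here — the result is elementary and the only mild care needed is in the argument for the full factor of $8$ (as opposed to merely $4$), where one must invoke the consecutive-even-integers observation rather than just the fact that $n\pm1$ are each even. This is the one place a hurried proof might under-count the power of $2$, so I would state that step explicitly.
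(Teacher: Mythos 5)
Your proof is correct and follows essentially the same route as the paper: factor $n^2-1=(n-1)(n+1)$, establish divisibility by $8$ (via the two consecutive even factors, one of which is divisible by $4$ — the paper phrases this as a case split on $n \bmod 4$, which is the same observation) and by $3$ (via $3\nmid n$), then combine the coprime moduli. No gaps; the consecutive-even-integers remark you flag is exactly the step the paper also makes explicit.
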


\begin{proof}
    We consider the value $n^2-1=(n-1)(n+1)$. Since $n$ is odd, we know that $n\equiv 1,3 \pmod4$. If $n\equiv 1 \pmod4$ then $n+1\equiv 2 \pmod4$ and $n-1\equiv 0 \pmod4$, which instantly implies that $n^2\equiv 1 \pmod8$. If $n\equiv 3 \pmod4$ we reach the same conclusion by analogous reasoning. On the other hand, since $3\nmid n$ we have that $n\equiv 1,2 \pmod3$. If $n\equiv 1 \pmod3$ then $n-1\equiv 0 \pmod3$ and therefore $n^2-1\equiv 0 \pmod3$. Likewise, if $n\equiv 2 \pmod3$ then $n+1\equiv 0 \pmod3$ and therefore $n^2-1\equiv 0 \pmod3$. In both cases $n^2\equiv 1 \pmod3$. Thanks to the Chinese Remainder Theorem we finally have that $n^2\equiv 1 \pmod{24}$.
\end{proof}

This result we have just proved extends an earlier classical one by Conway and Guy published in \cite{conway_book_1996} that said that the square of every odd number is congruent to 1 modulo 8. The fact that this happens imposes limits on what can be achieved with Type I and II solutions, as Schinzel and Yamamoto point out, since they proved that the square of every natural number lacks both Type I and Type II solutions. For a modern proof, see \cite{elsholtz_counting_2015}.
\\ \\
Solutions of Type A are transversal to the identity of prime and composite; both types of numbers can possess a solution with such a structure. If a prime number $p$ possesses a solution of the form $(du,dv,duv)$ then every multiple $mp$ with $m>1$ possesses a solution of the form $(mdu,mdv,mduv)$ which still possesses the structure $(Du,Dv,Duv)$ with $D=md$. The main difference lies in the fact that, while the first solution is necessarily of Type II, the second does not belong to that type since $gcd(mp,Du)\geq m$.
\\ \\
It is also immediate to verify that, if we try to expand (\ref{T1}) for composite values $a$ of the form $a=4k+1$, then the implication from right to left is immediately fulfilled (it is enough to take $d=\frac{k+1+t}{w}$, $n=\frac{w+1}{3+4t}$, $u=\frac{1+np}{4dn-1}$, $v=np$, as we said before), while the opposite implication is false, for which it is enough to look for example at the value $a=25$, that has solutions of Type A but for which there is no $t\geq 0$ such that $6+1+t$ has a divisor $w$ such that $w\equiv -1 \pmod{3+4t}$. These solutions can never be of Type II for this value, being 25 a perfect square.
\\ \\
If a thorough analysis of prime numbers is performed, it can be seen that, among the first 9000 natural numbers, the only prime numbers lacking Type A solutions are 193 and 2521. In many articles these values suddenly appear as numbers that offer various types of problems or resist classification, as for example \cite{ionascu_erdos-straus_2010}. These two numbers, however, have a second common structure of the form $d(uv,up,vp)$. Both numbers possess solutions of this type; 193 has e.g. $$(50,1930,4825)=5(10,2\cdot193,5\cdot193)$$ and for 2521 we have the tern $$(638,55462,804199)=11(58,2\cdot2521,29\cdot2521)$$ This will be our second structure to analyze.

\begin{defin}
We say that a value $a \in \mathbb{N}$ has a solution of Type B if there exist $d,u,v \in \mathbb{N}$ such that $a$ is Egyptian of order 3 with a solution $(duv,dua,dva)$.    
\end{defin}

First, let us characterize in a simple way what a prime number must fulfill to possess a solution of this type, and we will find an interesting parallelism with the previous case.

\begin{theor}
\label{T4}
Let be $p \in \mathbb{N}$ prime, let $d \in \mathbb{N}$. There exists a Type B solution for $p$ if and only if it exists $n \in \mathbb{N}$  such that $p\equiv -n \pmod{4dn-1}$.  In addition this solution is of Type II. 
\end{theor}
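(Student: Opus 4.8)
The plan is to reduce the defining identity of a Type B solution to a single Diophantine equation and read the congruence off directly. Starting from the definition, a Type B solution for $p$ means that $\frac{4}{p} = \frac{1}{duv} + \frac{1}{dup} + \frac{1}{dvp}$. Putting the right-hand side over the common denominator $duvp$ gives $\frac{p + u + v}{duvp}$, so after cross-multiplying against $\frac{4}{p}$ and cancelling $p$ I would obtain the clean relation $$4duv = p + u + v.$$ This is the Type B analogue of the identity $4duv = p(1+u+v)$ that governs Type A in the proof of Theorem \ref{T2}, and the whole statement should fall out of it.

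Next I would solve this relation for $v$, isolating the divisibility. Grouping the $v$ terms yields $v(4du - 1) = p + u$, hence $v = \frac{p+u}{4du-1}$. Setting $n := u$, the quantity $v$ is a positive integer precisely when $4dn - 1 \mid p + n$, that is, when $p \equiv -n \pmod{4dn-1}$, which is exactly the stated condition. For the forward implication, a genuine Type B solution forces $4du - 1 \mid p + u$, so $n = u$ witnesses the congruence. For the reverse, given such an $n$ I set $u = n$ and $v = \frac{p+n}{4dn-1}$; since $4dn - 1 \geq 3$ and $p + n > 0$, divisibility makes $v$ a positive integer automatically (no separate positivity hypothesis is needed), and back-substitution recovers $4duv = p + u + v$ and hence the fraction identity. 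This establishes the biconditional.

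For the final assertion I would argue, in the spirit of the proof of Theorem \ref{T2}, that the solution is necessarily of Type II. By construction $dup$ and $dvp$ are both divisible by $p$, so at least two of the three denominators are multiples of $p$; it remains to show $p \nmid duv$. If instead $p \mid duv$, then all three denominators would be divisible by $p$, and writing each as $p$ times a positive integer would turn the identity into $4 = \frac{1}{w} + \frac{1}{du} + \frac{1}{dv}$ with $w, du, dv$ positive integers, whose right-hand side is at most $3$ — a contradiction. Hence $p \nmid duv$, so $\gcd(p, duv) = 1$, exactly two denominators are divisible by $p$, and the solution matches the Type II pattern. One also checks, using $p \geq 3n - 1$ (forced by $p + n \geq 4dn - 1$), that $v < p$ and $u \leq p$, so that $duv$ is the smallest denominator and the labelling agrees with the ordered definition of Type II.

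The computations here are all routine; the only genuine content is recognizing the reduction to $4duv = p + u + v$ and the identification $n = u$, which makes the congruence appear without any search over parameters. The point requiring the most care is the Type II claim: one must rule out the degenerate possibility that $p$ divides $duv$ as well, and the clean way to do this is the ``three unit fractions cannot sum to $4$'' argument above rather than a direct divisibility chase. I expect no substantial obstacle beyond bookkeeping the positivity and ordering so that the equivalence is an honest biconditional with no hidden side conditions.
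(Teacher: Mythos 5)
Your proof of the biconditional follows essentially the same route as the paper's: both reduce the Type B identity to $4duv = p+u+v$, isolate one coordinate to expose the divisibility condition (the paper solves for $u = \frac{p+v}{4dv-1}$ and identifies $n$ with $v$, you solve for $v$ and identify $n$ with $u$ --- a purely cosmetic difference by symmetry), and in the reverse direction both define the missing coordinate as $\frac{p+n}{4dn-1}$ and verify the identity. Where you genuinely diverge is the Type II claim. The paper proves $\gcd(p,duv)=1$ by citing Monks and Velingker's result that $p^2$ cannot divide any coordinate of a solution: if $p \mid duv$ then, $p$ being prime, $p$ divides $d$, $u$ or $v$, whence $p^2$ divides $dup$ or $dvp$, a contradiction with that external theorem. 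Your argument is self-contained and more elementary: if $p \mid duv$ then all three denominators are multiples of $p$, and multiplying the identity through by $p$ expresses $4$ as a sum of three unit fractions, which is impossible since such a sum is at most $3$. This buys independence from the Monks--Velingker reference at no cost (it in effect reproves exactly the special case of their result that the paper needs), and it is arguably cleaner. Your closing check that $duv$ is genuinely the smallest coordinate (via $p+n \ge 4dn-1$, hence $u \le p$ and $v < p$) addresses the ordering convention $x \le y \le z$ that the paper's definition of Type II implicitly assumes but its proof never verifies; that extra care is correct and harmless.
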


\begin{proof}
    First we will prove the implication from left to right. We have a solution that satisfies the equation $$\frac{4}{p}=\frac{1}{duv}+\frac{1}{dup}+\frac{1}{dvp}$$ and which we can rewrite as $$4duv=p+u+v$$ This implies that $$u=\frac{p+v}{4dv-1}$$ and this number will only be natural if $4dv-1\mid p+v$ or, what is the same thing, there exists a certain $n \in \mathbb{N}$ such that $$p+n \equiv 0 \pmod{4dn-1}$$

    To prove the converse implication, let $n \in \mathbb{N}$ be such that the above congruence is satisfied for a certain value $d\in\mathbb{N}$.  We define $u=\frac{p+n}{4dn-1}$, $v=n$. Both numbers are natural and it’s straigthforward to prove that $$\frac{4}{p}=\frac{1}{duv}+\frac{1}{dup}+\frac{1}{dvp}$$ Moreover, it always happens that $gcd(p,duv)=1$, otherwise $p^2$ would be divisor of at least one of the three coordinates of the solution and such a thing is impossible, as demonstrated by Monks and Velingker in \cite{monks_erdos-straus_2007}. That classifies these solutions again in the Type II category, which was the last thing we wanted to prove.
\end{proof}

An immediate and very interesting conclusion is that this characterization does not use the fact that $p$ is a prime number, but it is necessary to consider it to demonstrate that this solution is of Type II, since there are composite numbers that have solutions of Type B but are not of Type II, for example $n=6$. On the other hand there are also composite numbers that do not have solutions of this type, for example $n=15$.  As a joint conclusion, we have that all the solutions of Type A and B are in particular of Type II when they are referred to a prime number.
\\ \\
Again, Type B solutions are transversal to the identity of prime and composite; both types of numbers can possess a solution with such a structure. However, unlike the previous case, if a prime number $p$ has a solution of the form $d(uv,up,vp)$ this does not imply that every multiple $mp$  with $m>1$  has a solution of the form  $dm(uv,up,vp)$.
\\ \\
As we have already mentioned above, perfect squares can never have solutions of Type I or II. This implies, first of all, that if a perfect square $a$ has a Type A solution, this solution cannot be of Type II and, therefore, must be inherited from one of its divisors.
\\ \\
On the other hand, a stronger conclusion is the following: 

\begin{theor}
\label{T5}
A perfect square $s$ can never have a Type B solution,, whether or not it is of Type II.    
\end{theor}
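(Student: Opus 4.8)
The plan is to ignore solution types entirely and attack the defining Diophantine equation directly, since the statement asks for the stronger conclusion independent of Type II. Writing $s = m^2$ and unwinding the definition of a Type B solution $(duv, dus, dvs)$ exactly as in the proof of Theorem \ref{T4}, I would first reduce to the single equation $4duv = s + u + v$. Multiplying by $4d$ and completing the factorization, this is equivalent to
\[
(4du - 1)(4dv - 1) = 4ds + 1 = 4dm^2 + 1.
\]
So it suffices to show that $N := 4dm^2 + 1$ admits no factorization into two factors each congruent to $-1 \pmod{4d}$; the factors $A = 4du - 1$ and $B = 4dv - 1$ are precisely of this shape.

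The engine of the proof is the quadratic character $\left(\frac{-d}{\cdot}\right)$. First I would analyze the prime factors of $N$: if $q \mid N$ then $q$ is odd and $q \nmid 2d$ (otherwise $q \mid 1$), and $4dm^2 \equiv -1 \pmod q$ rearranges to $-d \equiv \big((2m)^{-1}\big)^2 \pmod q$. Hence $-d$ is a nonzero quadratic residue modulo every prime divisor $q$ of $N$, i.e. $\left(\frac{-d}{q}\right) = 1$ for all $q \mid N$. By multiplicativity of the Jacobi symbol this forces $\left(\frac{-d}{A}\right) = 1$ for any divisor $A > 1$ of $N$, since every prime factor of $A$ is a prime factor of $N$.

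On the other hand, I would compute $\left(\frac{-d}{A}\right)$ directly from the congruence $A \equiv -1 \pmod{4d}$ using reciprocity, and show it equals $-1$. Splitting $\left(\frac{-d}{A}\right) = \left(\frac{-1}{A}\right)\left(\frac{d}{A}\right)$, the first factor is $-1$ because $A \equiv 3 \pmod 4$; for the second, writing $d = 2^e d'$ with $d'$ odd, the congruence $A \equiv -1 \pmod{4d}$ gives $A \equiv 7 \pmod 8$ whenever $e \geq 1$ (so $\left(\frac{2}{A}\right) = 1$), while Jacobi reciprocity together with $A \equiv -1 \pmod{d'}$ and $A \equiv 3 \pmod 4$ yields $\left(\frac{d'}{A}\right) = 1$. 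Thus $\left(\frac{-d}{A}\right) = -1$, contradicting the previous paragraph and completing the proof.

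The main obstacle is the reciprocity computation of the last paragraph: one must track the $2$-adic part of $d$ carefully, since the value of $\left(\frac{2}{A}\right)$ and the sign in reciprocity both depend on $A \bmod 8$, and it is the full congruence $A \equiv -1 \pmod{4d}$ --- not merely $A \equiv -1 \pmod 4$ --- that pins these down. A useful sanity check before writing the general argument is the case $d = 1$, where $N = 4m^2 + 1$ has only prime factors $\equiv 1 \pmod 4$ and so cannot have a divisor $\equiv 3 \pmod 4$; the Jacobi-symbol argument is exactly the device that upgrades this transparent special case to arbitrary $d$.
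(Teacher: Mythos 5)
Your proposal is correct, but it follows a genuinely different route from the paper's. The paper argues top-down: it invokes Theorem \ref{T4} to extract the congruence $s\equiv -n \pmod{4dn-1}$, notes that the construction in that theorem produces a polynomial identity $\frac{4}{s}=\frac{1}{P_{d,n}(s)}+\frac{1}{Q_{d,n}(s)}+\frac{1}{R_{d,n}(s)}$ valid on the whole residue class, and then cites Mordell's theorem --- that a residue class admitting such a polynomial identity cannot be a class of quadratic residues --- to contradict $s$ being a square. You replace the appeal to Mordell by a self-contained computation: the factorization $(4du-1)(4dv-1)=4dm^2+1$, the observation that $-d$ is a nonzero quadratic residue modulo every prime factor of $4dm^2+1$ (hence the Jacobi symbol $\left(\frac{-d}{A}\right)$ equals $1$ for every divisor $A>1$), and the reciprocity calculation showing that $A\equiv -1\pmod{4d}$ forces $\left(\frac{-d}{A}\right)=-1$; all steps check out, including the $2$-adic bookkeeping (when $d$ is even, $A\equiv -1 \pmod 8$ gives $\left(\frac{2}{A}\right)=1$) and the needed coprimality $\gcd(d,A)=1$, which follows from $A\mid 4dm^2+1$. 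What your approach buys is elementarity and independence from Mordell's result, at the cost of length; what the paper's buys is brevity, at the cost of using a nontrivial black box. It is worth noting that your final reciprocity step is essentially a direct proof of the paper's Corollary 1 (that $\left(\frac{-n}{4dn-1}\right)=-1$, by the same computation with the roles of $d$ and $n$ exchanged), which the paper instead derives as a consequence of Theorem \ref{T5}; your argument thus inverts the paper's logical order, establishing the Jacobi-symbol identity first and deducing the theorem from it.
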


\begin{proof}
    If it does, then using (\ref{T4}) we have that there exist $d,n \in \mathbb{N}$ such that $$s\equiv -n \pmod{4dn-1}$$ and therefore, following the steps of the theorem, we construct the identity $$\frac{4}{s}=\frac{1}{dn\frac{s+n}{4dn-1}}+\frac{1}{sd\frac{s+n}{4dn-1}}+\frac{1}{sdn}$$ This identity is of the form $$\frac{4}{s}=\frac{1}{P_{d,n}(s)}+\frac{1}{Q_{d,n}(s)}+\frac{1}{R_{d,n}(s)}$$ where all these functions are polynomial in the variable $s$, and Mordell proved that, then, $-n$ cannot be a quadratic residue module $4dn-1$, but it is, since $s$ is a perfect square, and this is a contradiction. Therefore no perfect square ever possesses a Type B solution.
\end{proof}

Another immediate conclusion is the following:

\begin{corol}
Let $d,n \in \mathbb{N}$, then $\left( \frac{-n}{4dn-1} \right)=-1$ with $\left(\frac{a}{b}\right)$ equal to the Jacobi symbol. In particular if $4dn-1$ is prime then $\left(\frac{n}{4dn-1}\right)=1$ and $n$ is always quadratic residue module $4dn-1$ (but may or may not be if $4dn-1$ is a composite number).    
\end{corol}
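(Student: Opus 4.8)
The plan is to treat this as a self-contained computation of the Jacobi symbol $\left(\frac{-n}{m}\right)$, where I abbreviate $m = 4dn - 1$, rather than leaning on the non-residue conclusion of (\ref{T5}) (which, for composite $m$, would only say that $-n$ is not a perfect square modulo $m$, a statement strictly weaker than the value of the Jacobi symbol being $-1$). First I would record the two elementary congruences that drive everything: since $4dn \equiv 0 \pmod 4$ we have $m \equiv 3 \pmod 4$, and since $4dn \equiv 0 \pmod n$ we have $m \equiv -1 \pmod n$, so in particular $\gcd(n,m)=1$. By multiplicativity of the Jacobi symbol in the numerator, $\left(\frac{-n}{m}\right) = \left(\frac{-1}{m}\right)\left(\frac{n}{m}\right)$, and the complementary law gives $\left(\frac{-1}{m}\right) = (-1)^{(m-1)/2} = -1$ because $m \equiv 3 \pmod 4$. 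Thus the whole statement reduces to proving $\left(\frac{n}{m}\right) = 1$.

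The heart of the argument is quadratic reciprocity for the Jacobi symbol. When $n$ is odd, $n$ and $m$ are odd and coprime, so reciprocity yields $\left(\frac{n}{m}\right) = (-1)^{\frac{n-1}{2}\frac{m-1}{2}}\left(\frac{m}{n}\right)$; since $\frac{m-1}{2}$ is odd the sign collapses to $(-1)^{(n-1)/2}$, while $m \equiv -1 \pmod n$ gives $\left(\frac{m}{n}\right) = \left(\frac{-1}{n}\right) = (-1)^{(n-1)/2}$. Multiplying, the two signs cancel and $\left(\frac{n}{m}\right) = 1$. When $n$ is even I would first strip the power of two: writing $n = 2^a n'$ with $n'$ odd, note that an even $n$ forces $4dn \equiv 0 \pmod 8$, hence $m \equiv 7 \pmod 8$ and $\left(\frac{2}{m}\right) = 1$, so the even part contributes nothing and $\left(\frac{n}{m}\right) = \left(\frac{n'}{m}\right)$; applying the odd case to $n'$ (which still satisfies $m \equiv -1 \pmod{n'}$) then finishes the computation.

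The step that needs the most care is the even case, precisely because Jacobi reciprocity requires odd arguments: the factorization $n = 2^a n'$ and the upgrade of $m \equiv 3 \pmod 4$ to $m \equiv 7 \pmod 8$ are exactly what make $\left(\frac{2}{m}\right)^a$ vanish, and a careless application would slip here. Finally, for the ``in particular'' clause, when $m = 4dn-1$ is prime the Jacobi symbol equals the Legendre symbol, so $\left(\frac{-n}{m}\right) = -1$ together with $\left(\frac{-1}{m}\right) = -1$ forces $\left(\frac{n}{m}\right) = 1$, i.e. $n$ is a genuine quadratic residue; for composite $m$ the equality $\left(\frac{n}{m}\right) = 1$ is only the Jacobi-symbol statement and carries no residue guarantee, which is precisely the caveat the corollary records. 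I would also remark that this computation re-proves (\ref{T5}) cleanly, since $\left(\frac{-n}{m}\right) = -1$ exhibits $-n$ as a non-residue directly, matching the Mordell obstruction invoked there.
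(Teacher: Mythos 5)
Your proof is correct, and it takes a genuinely different route from the paper. The paper gives no computation at all: the corollary is presented as an ``immediate conclusion'' of (\ref{T5}), i.e.\ the Jacobi value is read off Mordell's obstruction, which says that a residue class admitting a polynomial identity (here $-n \pmod{4dn-1}$, via the Type B construction) cannot be a quadratic residue. You instead evaluate the symbol directly: you split off $\left(\frac{-1}{4dn-1}\right)=-1$ from $4dn-1\equiv 3\pmod 4$, and obtain $\left(\frac{n}{4dn-1}\right)=1$ by Jacobi reciprocity combined with $4dn-1\equiv -1\pmod n$, with a separate and correctly handled reduction to $4dn-1\equiv 7 \pmod 8$ when $n$ is even. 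This buys something real: as you note, for composite $4dn-1$ the non-residue conclusion available from (\ref{T5}) and Mordell is strictly weaker than the claim that the Jacobi symbol equals $-1$, so the paper's ``immediate'' derivation leaves a gap that your computation actually closes; moreover, your argument re-proves (\ref{T5}) without Mordell, since a perfect square $s\equiv -n\pmod{4dn-1}$ would force the symbol to be $0$ or $+1$. The paper's route has the virtue of explaining why the statement is tied to the Erdos--Straus structure, but yours is the one that fully establishes the statement as written.
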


We can also characterize the Type B solutions for the particular case of prime numbers $p=4k+1$.

\begin{theor}
\label{T6}
Let be $p \in \mathbb{N}$ prime. Suppose that $p=4k+1$, there exists a solution for $p$ with the form $(duv,dup,dvp)$ with $d,u,v \in \mathbb{N}$ if and only if there exists $t\geq0$ and two positive divisors $a,b$ of $k+1+t$ such that $a+b=3+4t$.
\end{theor}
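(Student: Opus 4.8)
The plan is to reduce both implications to the single Diophantine equation governing a Type B solution and then to invoke the primality of $p$ exactly once, at the point where it is genuinely needed. Recall from the proof of (\ref{T4}) that a triple $(duv,dup,dvp)$ satisfies $\frac{4}{p}=\frac{1}{duv}+\frac{1}{dup}+\frac{1}{dvp}$ precisely when
\[
4duv=p+u+v .
\]
I would take this as the common hinge and set up the correspondence $u\leftrightarrow a$, $v\leftrightarrow b$, $M=duv$, so that the right-hand side reads $p+a+b=4M$. The theorem then amounts to translating ``$a,b$ are divisors of $M=duv$'' into ``$a,b$ are divisors of $k+1+t$ with $a+b=3+4t$'', and back.

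For the forward implication I would start from a given Type B solution and its equation $4duv=p+u+v$. Writing $p=4k+1$ gives $u+v=4duv-(4k+1)\equiv 3\pmod 4$, and since $u,v\geq 1$ this forces $u+v\geq 3$; hence $t:=\tfrac{u+v-3}{4}$ is a nonnegative integer. Substituting back, $duv=\tfrac{p+u+v}{4}=k+1+t$, so setting $a=u$ and $b=v$ we obtain two positive divisors of $M=k+1+t$ (both evidently divide $duv$) with $a+b=u+v=3+4t$. This direction is essentially bookkeeping and, notably, does not use primality at all.

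The converse carries the real content. Given $t\geq 0$ and divisors $a,b$ of $M:=k+1+t$ with $a+b=3+4t$, a direct computation yields $p=4M-(a+b)$, and the natural candidate is $u=a$, $v=b$, $d=M/(ab)$, which satisfies $4duv=4M=p+u+v$. The obstacle is that $d$ must be a positive integer: the hypothesis only gives $a\mid M$ and $b\mid M$ separately, not $ab\mid M$. The key step, and the unique place where primality enters, is to prove $\gcd(a,b)=1$. For this I would put $g=\gcd(a,b)$; since $g\mid a\mid M$ we have $g\mid 4M$, and since $g\mid a+b$ we get $g\mid 4M-(a+b)=p$, so $g\in\{1,p\}$. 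Because $a,b\leq M$ forces $a+b\leq 2M$, the relation $a+b=4M-p$ gives $M\leq p/2$, so every divisor of $M$ is smaller than $p$ and $g=p$ is impossible, leaving $g=1$. Coprimality then upgrades $a\mid M$ and $b\mid M$ to $ab\mid M$, so $d=M/(ab)\in\mathbb{N}$ and $(duv,dup,dvp)$ with $u=a,\,v=b$ is a genuine Type B solution.

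I expect the coprimality step to be the crux. It is the one spot where primality of $p$ is used, and it is exactly what bridges the gap between knowing that $a$ and $b$ divide $M$ individually and needing $ab$ to divide $M$. For composite values this step genuinely fails, since $\gcd(a,b)$ may then be a nontrivial proper divisor of the number, so one should not expect the clean equivalence to persist outside the prime case. The remaining verifications, namely the identity $4duv=p+u+v$ and the arithmetic translation $M=k+1+t \iff a+b=3+4t$, are routine.
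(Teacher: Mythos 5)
Your proof is correct, but it takes a genuinely different route from the paper's, and the difference is substantive. The paper never touches the equation $4duv=p+u+v$ in this proof: it starts from the congruence characterization of Theorem 4, $p\equiv -n \pmod{4dn-1}$, multiplies through to obtain $k\equiv -dn^2-dn \pmod{4dn-1}$, converts that congruence into the equality $k+1+t=dn(3+4t-n)$, and reads off $a=n$, $b=3+4t-n$; the converse simply reverses these manipulations. Crucially, in both directions the paper works with the condition $ab\mid k+1+t$ (the \emph{product} divides), which is what the equality $k+1+t=dn(3+4t-n)$ delivers and exactly what the reversal needs as input. You instead work directly from the Diophantine equation, and --- this is the real divergence --- you honor the theorem's literal hypothesis, which only asserts that $a$ and $b$ \emph{each} divide $k+1+t$. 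That hypothesis is strictly weaker than $ab\mid k+1+t$, and your gcd argument ($\gcd(a,b)$ divides $4M-(a+b)=p$, while $2M\leq p$ forces every divisor of $M$ to be smaller than $p$) is precisely what bridges the gap; the paper never addresses this point. The distinction is not pedantry: for the composite value $n=25$ (so $k=6$), taking $t=3$, $a=5$, $b=10$ gives two divisors of $k+1+t=10$ with $a+b=15=3+4t$, yet $25$ is a perfect square and so has no Type B solution by Theorem 5. Hence the statement as literally written genuinely requires primality, your coprimality step is where it enters, and the paper's Corollary 2 (that the theorem holds verbatim for composite $p=4k+1$) is only true under the stronger reading $ab\mid k+1+t$ that its proof silently adopts. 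In short: the paper proves a clean congruence-reversal equivalence for the product-divides version of the statement, while your argument proves the statement as actually written and pinpoints where primality is indispensable.
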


\begin{proof}
    We suppose first that $p$ has a Type B solution, then exists $d,n \in \mathbb{N}$ such that $$p\equiv -n \pmod{4dn-1}$$ then $$4k\equiv -n-1 \pmod{4dn-1}$$ and we can deduce that $$k\equiv -dn^2-dn \pmod{4dn-1}$$ This implies that exists $c \in \mathbb{Z}$ with $k+dn^2+dn=c(4dn-1)$. We can also suppose that $c$ is natural because $4dn-1$ and $k+dn^2+dn\in \mathbb{N}$. If we reorder the expression, we have that $$k+c=dn(4c-1-n)$$ We define $c=t+1$, $t\geq0$, so $k+1+t=dn(3+4t-n)$. We define $a=n$, $b=3+4t-n$.We have trivially then that $a,b \in \mathbb{N}$, $ab\mid k+1+t$, $a+b=3+4t$.
    \\ \\
    Conversely, lets suppose that $\exists a,b \in \mathbb{N}$, $t\geq0$ such that $ab\mid k+1+t$, $a+b=3+4t$. We can reverse all the process defining $a=n$, $b=3+4t-a$. We define $$z=n(3+4t-n)=ab$$ then $$k+1+t=dz, d\in \mathbb{N}$$ and therefore $$k+1+t=3dn+4dnt-dn^2$$ We define $t=c-1$, $c\geq1$ because $t\geq0$. Then $k+c=4dnc-dn-dn^2$ and rearranging we arrive to $k+dn+dn^2=c(4dn-1)$, $c\geq1$. It is straightforward then that $$k\equiv -dn-dn^2 \pmod{4dn-1}$$ and this implies automatically that there are $d,n \in \mathbb{N}$ such that $p\equiv -n \pmod{4dn-1}$ and therefore $p$ has a Type B solution.
\end{proof}

\begin{corol}
The theorem also holds if $p=4k+1$ but it’s not prime (we never use the primality of $p$ in the demonstration).
\end{corol}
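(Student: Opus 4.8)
The plan is to verify the corollary by auditing the proof of Theorem \ref{T6} and confirming that at no stage is the primality of $p$ actually invoked; the primality hypothesis is inert, and dropping it changes nothing. The only place where primality could conceivably enter is the opening appeal to Theorem \ref{T4}, so I would isolate that dependency first and then check the purely arithmetic remainder.

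Regarding Theorem \ref{T4}: I would point out that its existence characterization — the equivalence between the existence of a Type B solution and the solvability of $p \equiv -n \pmod{4dn-1}$ — is established by pure algebra. The forward direction merely rewrites the defining equation $4duv = p+u+v$ as $u = (p+v)/(4dv-1)$ and reads off the divisibility condition, while the converse explicitly constructs $u = (p+n)/(4dn-1)$, $v = n$ and checks the identity. Neither manipulation uses that $p$ is prime. As already noted in the remark following Theorem \ref{T4}, primality is used only in the final clause asserting that the solution is of Type II, through the Monks and Velingker bound ruling out $p^2$ dividing a coordinate. Since Theorem \ref{T6} invokes only the existence equivalence and not the Type II conclusion, this dependency carries over to composite $p = 4k+1$ without alteration.

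The remainder of the proof of Theorem \ref{T6} is elementary modular arithmetic performed on the integer $k$ alone. Starting from $p \equiv -n \pmod{4dn-1}$, one substitutes $p = 4k+1$ and reduces to $k \equiv -dn^2 - dn \pmod{4dn-1}$; one then introduces the integer $c$ with $k + dn^2 + dn = c(4dn-1)$, rearranges to $k + c = dn(4c-1-n)$, sets $c = t+1$ and $a = n$, $b = 3+4t-n$, and extracts the conditions $ab \mid k+1+t$ and $a+b = 3+4t$. The converse direction reverses exactly these steps. Every equation here treats $k$ as an arbitrary natural number, and $p$ appears only through the fixed linear substitution $p = 4k+1$; nowhere is it assumed that $4k+1$ lacks a nontrivial factorization.

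The only subtlety — and the mildest of obstacles — is to confirm that the object produced in the converse direction is genuinely a Type B solution in the sense of the Definition when the value is composite. This is immediate, since Type B solutions are defined for every value in $\mathbb{N}$ and the construction of Theorem \ref{T4} yields a triple of the required form $(duv, dup, dvp)$ regardless of the arithmetic nature of $p$. Collecting these observations, the biconditional of Theorem \ref{T6} holds verbatim for any $p = 4k+1$, prime or not, which is exactly the claim of the corollary.
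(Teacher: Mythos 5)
Your proposal is correct and follows exactly the paper's own justification: the corollary rests on the observation that the proof of Theorem \ref{T6} (and the existence equivalence of Theorem \ref{T4} it invokes) is pure algebra and modular arithmetic in $k$, with primality entering only the Type II classification, which the corollary does not claim. Your audit is simply a more explicit version of the paper's parenthetical remark, including the correct note that the Type B definition applies to all of $\mathbb{N}$.
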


In our previous paper, to obtain the characterization of (\ref{T1}), we employed the following prior result, whose proof, similar to (\ref{T4}) but more complex in some steps, can be read in \cite{lopez_structure_2022}.

\begin{theor}
\label{T7}
Let be $p \in \mathbb{N}$ prime. There exists a Type A solution for $p$ if and only if it exists $d,n \in \mathbb{N}$ such that $p\equiv -4d \pmod{4dn-1}$. In addition the values $u$ and $v$ are coprime and the solution is of Type II.
\end{theor}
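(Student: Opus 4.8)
The plan is to imitate the proof of Theorem~\ref{T4} almost verbatim, the only genuine novelty being a change of variable that converts a divisibility condition into the stated congruence $p \equiv -4d$. I would start by clearing denominators in $\frac{4}{p} = \frac{1}{du} + \frac{1}{dv} + \frac{1}{duv}$, which gives the single Diophantine relation $4duv = p(u+v+1)$, the same relation already recorded in the discussion following Theorem~\ref{T1}. Everything is then read off from this relation, exploiting the fact that $4dn \equiv 1 \pmod{4dn-1}$, so that $n$ and $4d$ are mutually inverse modulo $4dn-1$.

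For the implication from left to right, suppose $(du,dv,duv)$ is a Type A solution, ordered so that $u \le v$. By Theorem~\ref{T2} (and the remark that its argument applies to every prime, not only to $p = 4k+1$) the solution is of Type II, and in particular $p \mid v$; I would write $v = np$ with $n \in \mathbb{N}$. Substituting into $4duv = p(u+v+1)$ and cancelling one factor of $p$ yields $4dun = u + np + 1$, hence $u(4dn-1) = np+1$, so that $4dn-1 \mid np+1$, i.e. $np \equiv -1 \pmod{4dn-1}$. The key observation — and the step I expect to be the crux — is the inversion $4dn \equiv 1 \pmod{4dn-1}$: multiplying $np \equiv -1$ by $4d$ gives $4dn\,p \equiv -4d$, that is $p \equiv -4d \pmod{4dn-1}$, as required, with $d$ and $n$ manifestly natural.

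For the converse I would simply reverse these steps. Given $d,n \in \mathbb{N}$ with $p \equiv -4d \pmod{4dn-1}$, multiplying by $n$ and using $4dn \equiv 1$ gives $np \equiv -1 \pmod{4dn-1}$, so $4dn-1 \mid np+1$; then $u := \frac{np+1}{4dn-1}$ and $v := np$ are positive integers, and a direct algebraic check confirms that $(du,dv,duv)$ solves $\frac{4}{p} = \frac{1}{du}+\frac{1}{dv}+\frac{1}{duv}$.

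Finally, the two supplementary claims are short. Coprimality of $u$ and $v$ follows at once from the identity $u(4dn-1) = np+1 = v+1$, which shows $u(4dn-1) - v = 1$, so any common divisor of $u$ and $v$ divides $1$. That the constructed solution is of Type II is again Theorem~\ref{T2} applied to the solution just built; alternatively one notes $p \mid v$ by construction, while reducing $u(4dn-1) = np+1$ modulo $p$ gives $u(4dn-1) \equiv 1 \pmod p$ and hence $p \nmid u$, and the Monks--Velingker bound (no coordinate divisible by $p^2$) excludes $p \mid d$, so $p \nmid du$. The only place demanding genuine care is the modular inversion turning the divisibility statement into the congruence $p \equiv -4d$; the remaining manipulations are routine algebra and appeals to the already-proved Theorem~\ref{T2}.
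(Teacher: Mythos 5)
Your proof is correct; the wrinkle is that the paper never actually proves Theorem~\ref{T7} --- it defers to \cite{lopez_structure_2022}, saying only that the proof is ``similar to (\ref{T4}) but more complex in some steps.'' Your argument is exactly that kind of proof, and the values you construct, $u=\frac{np+1}{4dn-1}$ and $v=np$, coincide with the formulas the paper itself quotes after Theorem~\ref{T1}, so you have in effect supplied, in self-contained form, the argument the paper outsources. Every step checks out: clearing denominators to $4duv=p(u+v+1)$; invoking Theorem~\ref{T2} (which the paper extends to all primes) to get $p\mid v$, hence $v=np$, in the forward direction; the pivot identity $u(4dn-1)=np+1$; and the observation that $4dn\equiv 1\pmod{4dn-1}$, so that multiplying by $4d$ or by $n$ converts $np\equiv -1$ into $p\equiv -4d$ and back --- precisely the ``mutual inverses'' relation between $n$ and $4d$ that the paper remarks on after its main conjecture. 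The supplementary claims are also handled cleanly: $\gcd(u,v)=1$ is immediate from $u(4dn-1)-v=1$, and Type II follows either from Theorem~\ref{T2} or from your direct check that $p\nmid u$ (reduce $u(4dn-1)=np+1$ modulo $p$) together with $p\nmid d$ (otherwise $p^2\mid dv$, contradicting the Monks--Velingker result the paper already uses in proving Theorem~\ref{T4}).
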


Experimental evidence shows that every prime number always has at least one solution with one of these two types of structure. This leads us to formulate what is the main conjecture of this article:

\begin{conje}
Let be $p \in \mathbb{N}$ prime, then exists $d,n \in \mathbb{N}$ such that $p\equiv -4d \pmod{4dn-1}$ or $p\equiv -n \pmod{4dn-1}$. If this result is true, then this congruence system covers all primes and the Erdos-Straus conjecture is true.    
\end{conje}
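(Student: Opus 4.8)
The plan is to reduce the conjecture to its one genuinely hard residue class and then attack that class by a divisor-distribution argument. As recalled in the text, the previous article shows via Theorem \ref{T1} (already the case $t=0$) that every prime $p\not\equiv 1\pmod{24}$ possesses a Type A solution. So I would first record that, by Theorems \ref{T7} and \ref{T4}, the conjecture is equivalent to the single statement that every prime $p\equiv 1\pmod{24}$ has a Type A or a Type B solution. This concentrates all the difficulty in one class, which is unavoidable since $1$ is a quadratic residue modulo every $q$ and hence, by Mordell's obstruction (reproved for Type B in Theorem \ref{T5}), cannot be covered by any finite family of polynomial identities.

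Second, I would translate the two congruence conditions into divisor statements that are amenable to counting. By Theorem \ref{T7}, a Type A solution exists iff $4dn-1\mid p+4d$ for some $d,n\in\mathbb{N}$; writing $m=4dn-1$ this says $p+4d$ has a divisor $m\geq 4d-1$ lying in the class $-1\pmod{4d}$. By Theorem \ref{T4}, a Type B solution exists iff $4dn-1\mid p+n$, i.e. $p+n$ has a divisor $m=4dn-1\geq 4n-1$ lying in the class $-1\pmod{4n}$. In both cases the existence question becomes: as the shift parameter grows, does at least one of the integers $p+4d$ (resp. $p+n$) acquire a divisor in a prescribed arithmetic progression and size window? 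The heuristic advantage is that larger shifts produce integers with more divisors, so the bad event ``no admissible divisor'' ought to become rare.

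Third, I would try to make this precise with a sieve or character-sum estimate. Letting $d$ range over $1\leq d\leq D(p)$, one counts pairs $(d,m)$ with $m\mid p+4d$, $m\equiv -1\pmod{4d}$, $m\geq 4d-1$, and bounds the set of primes $p\equiv 1\pmod{24}$ for which this count is zero. The natural inputs are the Erdős--Straus counting framework of Elsholtz and Tao together with Bombieri--Vinogradov-type control on the distribution of divisors of shifted integers in arithmetic progressions; one would run the Type A count and the Type B count in parallel and bound the intersection of their exceptional sets.

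The hard part, and where I expect the plan to stall, is the passage from \emph{density zero} to \emph{empty}. Webb and others already furnish density zero for the entire Erdős--Straus problem, and Mordell's obstruction forces any honest proof to exploit the infinitude of the parameters $(d,n,t)$ rather than a bounded covering system; what is then required is uniform control of divisor distributions valid for every single prime, with no exceptional residues surviving. That uniformity is precisely the content that has kept Erdős--Straus open, and since Theorems \ref{T4} and \ref{T7} show this congruence system is equivalent to solvability for primes, a complete proof would resolve Erdős--Straus for primes outright. I would therefore expect only partial progress along this route: covering further explicit subfamilies of $1\pmod{24}$ by admitting larger $t$ in Theorems \ref{T1} and \ref{T6}, or sharpening the exceptional set below $x^{\theta}$ for some $\theta<1$, rather than a full resolution.
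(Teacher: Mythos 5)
The statement you were asked to prove is, in the paper, exactly what it is labelled: a conjecture. The paper offers no proof of it, only experimental verification (it is checked for the first ten thousand primes, i.e. up to 104729) plus structural evidence — each congruence in the system carries a polynomial identity (Theorems \ref{T4}, \ref{T7}, \ref{T8}), so by Mordell's criterion none of the covered classes consists of quadratic residues, and both families are genuinely needed, since 193 and 2521 lack Type A solutions while 23929 lacks Type B solutions. Your preliminary reductions are correct and consistent with the paper: by the $t=0$ case of Theorem \ref{T1}, every prime $p\not\equiv 1\pmod{24}$ already has a Type A solution, so via Theorems \ref{T7} and \ref{T4} the conjecture does reduce to showing that every prime $p\equiv 1\pmod{24}$ satisfies one of the two congruences, and your divisor reformulations (that $p+4d$ have a divisor $\equiv -1 \pmod{4d}$ of the right size, resp. that $p+n$ have a divisor $\equiv -1 \pmod{4n}$) are faithful restatements. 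Note also that the second sentence of the statement is the only part that is actually a theorem, and it follows immediately from Theorems \ref{T4} and \ref{T7}, which you cite.

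The genuine gap is your third step, and you have in fact named it yourself: sieve, character-sum, or Bombieri--Vinogradov-type inputs yield almost-all results with exceptional sets, and no known technique upgrades ``the set of possible counterexamples has density zero'' (already known from Webb, as the paper recalls) to ``the set is empty.'' Since Mordell's obstruction — which the paper extends to Type B solutions in Theorem \ref{T5} — rules out any finite covering system of polynomial identities for the class $1\pmod{24}$ (indeed for $1\pmod{840}$), any proof must exploit unboundedly many parameters $(d,n,t)$ with uniformity in $p$, and your proposal supplies no mechanism for that uniformity; it only points at the frameworks where such a mechanism would have to be found. So what you have written is a reasonable research program whose end state coincides with the paper's: the statement remains a conjecture, supported by the same reduction to $p\equiv 1\pmod{24}$ and by computation, not a theorem, and your own closing paragraph concedes as much.
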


It is very interesting to note that there is a modular relationship between the type $-n$ and type $-4d$ values, since both sets of divisors are mutual inverses in $\mathbb{Z}_{4dn-1}$.
\\ \\
The conjecture has been experimentally verified and is true for any prime number less than or equal to 104729, that is, it has been verified for the first 10000 prime numbers. Minor cases like $p=2$  or $p=3$  also satisfy the conjecture. The number of solutions does not have a regular growth pattern: a value as large as 83449 not only lacks Type B solutions, but also has only two Type A solutions that are at the same time of Type II (and they are, since 83449 is a prime number, the only two of this type that it has, since it cannot inherit others from any factor).
\\ \\
Both types of structure are necessary for the conjecture to be true: as previously mentioned, 193 and 2521 do not have Type A solutions but do have Type B solutions, and these are not isolated cases, since, for example, 66529 is another case. On the contrary, 23929 does not have Type B solutions but it does have Type A solutions. Both types of solutions, therefore, complement each other.
\\ \\
It may be asked whether it is possible to choose $d,n \in \mathbb{N}$ such that $4dn-1$ is always a prime number. While such a thing is possible for a huge number of cases of $p$, it is not possible to do so in general, and a significant example is, again, the case $p=2521$. This number has only one Type B solution which has two $4dn-1$ associated values, which are 87 and 1275, neither of which is a prime number. 2521 does, in fact, satisfy a property that may perhaps explain in part why it is so elusive: all values from 1 to 10 are quadratic residues in $\mathbb{Z}_{2521}$. In fact, all prime numbers congruent to 1 module 840 fulfill it, precisely one of the congruences that resisted Mordell when studying the conjecture and one of the most resistant to being categorized, perhaps in part because of this special property.
\\ \\
Elsholtz and Tao relate in \cite{elsholtz_counting_2015} the Type II solutions to six-coordinate vectors $(a,b,c,d,e,f)\in \mathbb{C}^6$ satisfying a system of equations. They call the set of all these values $\Sigma_n^\pi$ and define an application $\Pi_n^\pi$ between this set and the algebraic surface $S_n=\{{(x,y,z)\in\mathbb{C}^3:4xyz=nyz+nxz+nxy}\}$ as follows: $$\Pi_n^\pi(a,b,c,d,e,f)=(abd,acdn,bcdn)$$ These solutions are not just Type II, but $abd$ must be required to be coprime with $n$. This condition becomes automatic when $n$ is a prime number, but is not held in general.
\\ \\
It can be immediately observed that Type A solutions, with their values ordered in any possible way, are given when $a=1$ or $b=1$ with the identification $u=b$, $v=cn$ in the case $a=1$ and analogously when $b=1$. When $c=1$ we obtain, precisely, the Type B solutions, performing this time the automatic identification $u=a$, $v=b$. This fact suggests that these two forms of solutions are canonically relevant, being obtained by substituting for the smallest natural values for each of the three main parameters of the Type II solutions. Also, if we allow $d=1$, we will obtain a very relevant result, as we will see later.
\\ \\
The conjecture concerning the existence or non-existence of solutions of Type A and B can be studied under a purely polynomial point of view, associating to each type of solution an algebraic expression, as can be seen in the following theorem.

\begin{theor}
\label{T8}
Let $p$ be prime congruent to 1 modulo 4, we define the following polynomials for $x,y,z,t \in \mathbb{N}$:    

\begin{itemize}
    \item $P(x,y,t)=(4xy-1)(3+4t)-4x^2y$, $x,y,t\geq0$
    \item $Q(x,y,t)=(4xy-1)(3+4t)-4y$, $x,y,t\geq0$
    \item $R(x,y,t,z)=(4xyz-1)(3+4t)-4x^2y$, $x,y,z,t\geq0$
\end{itemize}

\begin{enumerate}[label=(\roman*)]
    \item If $p=P(x,y,t)$ then $p$ has a Type B solution.
    \item If $p=Q(x,y,t)$ then $p$ has a Type A solution.
    \item If $p=R(x,y,t)$ then $p$ has a Type II solution.
\end{enumerate}

All the implications, moreover, also hold conversely: if $p$ possesses a solution of any of the above types, then it necessarily belongs to the image of the corresponding polynomial.
\end{theor}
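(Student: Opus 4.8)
The plan is to read each polynomial as the characterizing congruence of its solution type, ``lifted'' to an exact integer identity, with the factor $(3+4t)$ inserted precisely to force the output to be $\equiv 1 \pmod 4$. Concretely, each of $P,Q,R$ has the shape $(4\Pi-1)(3+4t)-\rho$, where $4\Pi-1$ is a modulus of the kind appearing in (\ref{T4}) and (\ref{T7}) and $\rho$ is a residue term. Reducing modulo $4\Pi-1$ recovers the congruence that characterizes the corresponding solution type, while reducing modulo $4$ gives $(-1)(3)\equiv 1$, so the value is automatically $\equiv 1\pmod 4$. I would treat the three items by a single template whose two directions are: (a) reduce the polynomial modulo $4\Pi-1$ and invoke the relevant characterization; and (b) start from the characterization, write $p$ as (quotient)$\times$(modulus)$-\rho$, and check that the quotient equals $3+4t$ for some $t\ge 0$.

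For item (ii) I would use (\ref{T7}) directly. Setting $x=n$, $y=d$ one computes $Q(x,y,t)\equiv -4d\pmod{4dn-1}$, so if $p=Q(x,y,t)$ then $p\equiv -4d\pmod{4dn-1}$ and (\ref{T7}) yields a Type A solution. For the converse, (\ref{T7}) gives $d,n$ with $p\equiv -4d\pmod{4dn-1}$; then $4dn-1\mid p+4d$, and reducing $C(4dn-1)=p+4d$ modulo $4$ gives $-C\equiv 1$, so $C\equiv 3\pmod 4$. Since $C\ge 1$ this forces $C\ge 3$, hence $C=3+4t$ with $t=(C-3)/4\ge 0$ and $p=Q(n,d,t)$. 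Item (i) is the identical argument with (\ref{T4}) in place of (\ref{T7}): with $x=n$, $y=d$ one has $P(x,y,t)\equiv -4x^2y\equiv -x\pmod{4xy-1}$ (using $4xy\equiv 1$), so $p\equiv -n\pmod{4dn-1}$ and (\ref{T4}) applies; the converse again writes $p+4n^2d=(4dn-1)(3+4t)$ and checks the quotient is $\equiv 3\pmod 4$ and $\ge 3$.

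Item (iii) is the general case, of which (i) is the specialization $z=1$ (indeed $R(x,y,t,1)=P(x,y,t)$, matching the fact that Type B is Type II with the Elsholtz--Tao parameter $c=1$). Here I would first record the congruence characterizing a general Type II solution: writing it in the Elsholtz--Tao form $(abd,acdp,bcdp)$ with $4abcd=cp+a+b$ and eliminating $a=(cp+b)/(4bcd-1)$, multiplication by $4bd$ gives $p\equiv -4b^2d\pmod{4bcd-1}$. With $b=x$, $d=y$, $c=z$ this is exactly $R(x,y,t,z)\equiv -4x^2y\pmod{4xyz-1}$. The forward direction is then constructive: given $p=R(x,y,t,z)$, set $a=(zp+x)/(4xyz-1)$, which is a positive integer since $zp+x\equiv z(-4x^2y)+x\equiv -x+x\equiv 0\pmod{4xyz-1}$, and verify $4a\,xyz=zp+a+x$, so that $(axy,\,azyp,\,xzyp)$ solves the equation. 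Two coordinates are divisible by $p$ and, by the Monks--Velingker bound forbidding $p^2$ in any coordinate, $p\nmid axy$, so the solution is Type II. The converse runs exactly as in items (i)--(ii): from a Type II solution the displayed congruence gives $4xyz-1\mid p+4x^2y$ with quotient $\equiv 3\pmod 4$ and $\ge 3$, hence $p=R(x,y,t,z)$.

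The main obstacle I anticipate is not the algebra but justifying the converse of (iii): it rests on the claim that every Type II solution of $4/p$ is captured by the Elsholtz--Tao parametrization $(abd,acdp,bcdp)$ with $4abcd=cp+a+b$, i.e. on the surjectivity of $\Pi_p^\pi$ onto Type II solutions, which I would cite from \cite{elsholtz_counting_2015} (coprimality of $abd$ with $p$ being automatic for prime $p$). The only other place for error is the bookkeeping in each converse that the quotient is positive, congruent to $3$ modulo $4$, and therefore at least $3$, so it may legitimately be written as $3+4t$ with $t\ge 0$; this is routine once the sign of the residue term $\rho$ is tracked correctly.
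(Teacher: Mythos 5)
Your proposal is correct, and it reaches the theorem by a genuinely different route in the converse directions. The forward halves of (i) and (ii) coincide with the paper's (reduce the polynomial modulo $4xy-1$ and invoke (\ref{T4}) resp. (\ref{T7})), but for the converses the paper does not run those congruence characterizations backwards as you do: it passes instead through the divisor-theoretic results, using (\ref{T6}) for Type B (divisors $a,b$ of $k+1+t$ with $a+b=3+4t$) and (\ref{T1}) for Type A (a divisor $w\equiv -1 \pmod{3+4t}$ of $k+1+t$), and then converts those divisor statements into the polynomial identities by algebra. Your uniform quotient argument — write $p+\rho=C(4dn-1)$ with $C\geq 1$, observe $C\equiv 3\pmod 4$ since $p\equiv 1\pmod 4$ and the modulus is $\equiv 3\pmod 4$, hence $C=3+4t$ with $t\geq 0$ — is shorter, treats all three items by a single template, and is valid; what it gives up is the explicit correspondence, visible in the paper's route, between the polynomial variables and the divisors of $k+1+t$. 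In (iii) the divergence is in both the citation and the distribution of work: the paper invokes Mordell's characterization $(4abcd-1)d=an+b$ of values with Type II solutions and declares the algebra reversible in both directions, whereas you cite the Elsholtz--Tao parametrization $(abd,acdp,bcdp)$ with $4abcd=cp+a+b$; moreover your forward direction is constructive and self-contained — you exhibit $a=(zp+x)/(4xyz-1)$, verify the unit-fraction identity, and settle the Type II property via the Monks--Velingker exclusion of $p^2$ from any coordinate, which is exactly the device the paper uses in proving (\ref{T4}) — so only your converse of (iii) leans on an external result, namely surjectivity of the Elsholtz--Tao map onto Type II solutions for prime $p$, which the paper itself endorses (coprimality of $abd$ with $n$ is automatic when $n$ is prime). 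Both approaches are sound: yours buys uniformity and a constructive forward half of (iii); the paper's keeps the divisor structure of (\ref{T1}) and (\ref{T6}) in the foreground, which is the structural picture the rest of the article builds on.
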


\begin{proof}
    It is immediate to check, first, that the image of the three polynomials when their variables take natural values are always numbers congruent to 1 modulo 4. Now, we start with (i). We know that $$p\equiv -x \pmod{4xy-1}$$ which is equivalent by (\ref{T4}) to having a Type B solution. Now we consider $p=4k+1$ with a Type B solution, by (\ref{T6}) we know that there exists $a,b$ divisors of $k+1+t$ such that $a+b=3+4t$. We rename $a=x$, $b=3+4t-x$, then exists a certain $y$ such that $$k+1+t=xy(3+4t-x)$$ and then, with elementary algebra, $$p=4k+1=(4xy-1)(3+4t)-4x^2y$$
    To prove (ii), if we define $p=(4xy-1)(3+4t)-4y$, it is immediate to prove that $$p\equiv -4y \pmod{4xy-1}$$ which by (\ref{T7}) implies that if $p$ is prime, then it has a Type A solution. To prove the opposite, we consider that $p=4k+1$ has a solution of Type A, then by (\ref{T1}) there exists a divisor $w$ of $k+1+t$ such that $w$ is congruent to $-1$ modulus $3+4t$. This implies that $w+1=c(3+4t)$  for a certain $c\geq1$, and therefore there exists a certain natural number $e$ such that $k+1+t=ew=e(c(3+4t)-1)$. Rearranging we arrive at $$k+1+t=ce(3+4t)-e$$ which implies that $$p=4k+1=(4ce-1)(3+4t)-4e$$ By renaming $x=c$, $y=e$,we already have it.
    \\ \\
    To prove (iii) we rely on a classic result of Mordell, which can be read for example in \cite{mordell_diophantine_1970}, which says that a value $n$ has a Type II solution if and only if there exist natural values $a,b,c,d$ such that they satisfy $$(4abcd-1)d=an+b$$ By clearing we obtain that $$n=4bcd-\frac{d+b}{a}$$ which will be a natural number if and only if there exists a certain $w$ such that $d+b=aw$. By making the change of variable $d=aw-b$ we get that $$n=4abc(aw-b)-w$$ and this number can only be congruent to 1 modulo 4 if $w$ is congruent to 3 modulo 4 or, what is the same, there exists a certain positive value $s$ for which $w=3+4s$. By making a new change of variable we get that $$n=4abc(a(3+4s)-b)-(3+4s)$$ which, renaming the variables and with some easy manipulations, leads us to the polynomial $R(x,y,z,t)$ we were looking for. The whole procedure is immediately reversible so that the implication works both ways.
\end{proof}

\begin{remar}
It is immediate to check that

\begin{itemize}
    \item $R(x,y,t,1)=P(x,y,t)$
    \item $R(1,y,t,x)=Q(x,y,t)$
\end{itemize}

\end{remar}

\begin{remar}
A p-value has a solution that is both of Type A and B in the particular case in which $x=1$; in that case $P(x,y,t)=Q(x,y,t)$ and, therefore, $0=P(x,y,t)-Q(x,y,t)=4y(x+1)(x-1)$, that can only happen when $x=1$. Below we offer an alternative way of characterizing these solutions that are both of Type A and B.
\end{remar}

We mention now some general statements of Type A and B solutions concerning bounds for $d$ and other simple properties.

\begin{propo}
Let $p$ be a prime number congruent to 1 module 4. If $p$ has a solution of Type A then $d\leq \lfloor\frac{p+3}{8} \rfloor$. This bound is also optimal.
\end{propo}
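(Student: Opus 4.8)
The plan is to start from the arithmetic characterization of Type~A solutions in (\ref{T7}) and turn its congruence into an exact equation that isolates $d$. By (\ref{T7}) a Type~A solution for $p$ corresponds to $d,n\in\mathbb{N}$ with $p\equiv -4d \pmod{4dn-1}$, i.e. $4dn-1\mid p+4d$. Since $p+4d>0$ and $4dn-1\geq 3$, I would write $p+4d=j(4dn-1)$ for a positive integer $j\geq 1$ and rearrange to
$$p+j=4d(jn-1),\qquad\text{so that}\qquad d=\frac{p+j}{4(jn-1)}.$$
This already reduces the whole problem to maximizing the right-hand side over the admissible pairs $(j,n)$, noting that $jn\geq 2$ (if $jn=1$ the left side $p+j$ would vanish, which is impossible).

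The crux of the argument, and the step I expect to carry the real content of the proposition, is that the hypothesis $p\equiv 1 \pmod 4$ constrains $j$. Reducing $p+j=4d(jn-1)$ modulo $4$ gives $p\equiv -j \pmod 4$, and since $p\equiv 1$ this forces $j\equiv 3 \pmod 4$; in particular $j\geq 3$. Without this observation one only obtains the weaker bound $d\leq (p+2)/4$, coming from the naive minimal case $jn-1=1$; the sharper constant $1/8$ arises precisely because the smallest admissible value of $j$ is $3$ rather than $1$. This is where the congruence class of $p$ genuinely enters, and I would present it as the heart of the proof.

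With $j\geq 3$ and $n\geq 1$ secured, I would finish by a short monotonicity analysis of $d=\tfrac{p+j}{4(jn-1)}$. For fixed $j$ the denominator factor $jn-1$ is minimized at $n=1$, so $d\leq \tfrac{p+j}{4(j-1)}$; and $\tfrac{p+j}{4(j-1)}$ is decreasing in $j$ on $j>1$ (its derivative has numerator $-(p+1)<0$), so over the range $j\geq 3$ it is maximal at $j=3$, yielding $d\leq \tfrac{p+3}{8}$. Equivalently, the target inequality $2(p+j)\leq (jn-1)(p+3)$ specializes at $n=1$ to $(j-3)(p+1)\geq 0$, which holds exactly for $j\geq 3$, while larger $n$ only strengthens it. Since $d$ is a positive integer, the real bound $d\leq \tfrac{p+3}{8}$ upgrades to $d\leq \lfloor \tfrac{p+3}{8}\rfloor$.

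For optimality I would exhibit the extremal family explicitly. Taking $(j,n)=(3,1)$ forces $v=np=p$, $4dn-1=\tfrac{p+1}{2}$, and $u=\tfrac{np+1}{4dn-1}=2$, giving $d=\tfrac{p+3}{8}$; this value is a genuine integer exactly when $p\equiv 5 \pmod 8$, and then $\lfloor\tfrac{p+3}{8}\rfloor=\tfrac{p+3}{8}$. A direct substitution confirms that $(2d,\,dp,\,2dp)$ solves $\tfrac{4}{p}=\tfrac{1}{2d}+\tfrac{1}{dp}+\tfrac{1}{2dp}$, so the bound is attained for every prime $p\equiv 5 \pmod 8$ and hence cannot be improved.
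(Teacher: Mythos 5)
Your proof is correct, and it reaches the bound by what is ultimately the same computation as the paper, but entered through a different key lemma. You start from the congruence characterization of Type A solutions in (\ref{T7}), introduce the quotient $j$ via $p+4d=j(4dn-1)$, and derive $j\equiv 3\pmod 4$ by reducing modulo $4$; the paper instead starts from (\ref{T1}), whose parameter $t\geq 0$ already encodes that constraint. In fact your $j$ is exactly the paper's $3+4t$, and your identity $d=\frac{p+j}{4(jn-1)}$ is the paper's $d=\frac{k+1+t}{n(3+4t)-1}$ in different coordinates; after that, both proofs run the same two-step minimization (denominator minimal at $n=1$, then at the smallest admissible quotient, $j=3$, i.e.\ $t=0$), giving $\frac{p+3}{8}$. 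What your version buys is self-containedness and transparency about where the hypothesis $p\equiv 1 \pmod 4$ actually enters: it is precisely what rules out $j=1,2$ and sharpens the naive $(p+2)/4$ bound to $(p+3)/8$, a point the paper's proof hides because (\ref{T1}) absorbed it in the previous article. On optimality the two arguments also agree in substance: your extremal family $(j,n)=(3,1)$, attained exactly when $p\equiv 5\pmod 8$, coincides with the paper's case $t=0$, $n=1$, $w=2$, $k$ odd, of which its example $p=13$ is an instance; your explicit verification of the solution $(2d,dp,2dp)$ is a welcome extra check.
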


\begin{proof}
    By (\ref{T1}), we know that if $p$ has a solution of Type A then there exists $t>0$ and $w\equiv -1 \pmod{3+4t}$ such that $w\mid k+1+t$ and, in fact, $d=\frac{k+1+t}{w}$. We can write $w=-1+n(3+4t)$ with $n\geq1$. Then $d=\frac{k+1+t}{3n+4nt-1}$. Therefore, $$d=\frac{k+1+t}{3n+4nt-1}\leq\frac{k+1+t}{4t+2}\leq\frac{k+1}{2}=\frac{p+3}{8}$$ and then we have that $d\leq \lfloor\frac{p+3}{8} \rfloor$.The coordinate is also reached whenever $t=0$, $n=1$, what implies that $w=2$ and $2\mid k+1$, and therefore k is odd. For example, it is reached for $p=13$, for this value $k=3$ and $d=\frac{k+1}{2}=2=\frac{p+3}{8}=\lfloor\frac{p+3}{8} \rfloor$. Therefore the bound is optimal.
\end{proof}

\begin{propo}
Under the conditions of Proposition 1, if $t$ is such that $k+1+t$ has a divisor $w$ that satisfies that $w\equiv -1 \pmod{3+4t}$ then $t\in[{0,\lfloor \frac{k-1}{3} \rfloor }]$.    
\end{propo}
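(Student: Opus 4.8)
The claim is: under the conditions of Proposition 1 (i.e., $p = 4k+1$ prime with a Type A solution), if $t$ is such that $k+1+t$ has a divisor $w \equiv -1 \pmod{3+4t}$, then $t \in [0, \lfloor (k-1)/3 \rfloor]$.

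So we need to bound $t$ from above. Let me think about this.

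We have $w \equiv -1 \pmod{3+4t}$, so $w = -1 + n(3+4t)$ for some $n \geq 1$ (since $w$ is a positive divisor, and $w \geq 3+4t-1$ when $n=1$).

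Wait, let me check: $w$ is a positive divisor of $k+1+t$, and $w \equiv -1 \pmod{3+4t}$. The smallest positive value of $w$ satisfying this is $w = (3+4t) - 1 = 2+4t$ (when $n=1$), then $w = 2(3+4t)-1 = 5+8t$, etc.

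So $w \geq 2+4t$ (the minimum positive value $\equiv -1 \pmod{3+4t}$).

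Since $w \mid k+1+t$ and $w > 0$, we need $w \leq k+1+t$.

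Therefore: $2+4t \leq w \leq k+1+t$.

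From $2+4t \leq k+1+t$: $3t \leq k-1$, so $t \leq (k-1)/3$.

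Since $t$ is an integer, $t \leq \lfloor (k-1)/3 \rfloor$. And $t \geq 0$ by hypothesis.

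That's it. Let me verify the key inequality.

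The smallest positive residue $\equiv -1 \pmod{3+4t}$: we need $w \equiv -1 \equiv (3+4t)-1 = 2+4t \pmod{3+4t}$. So the smallest positive such $w$ is $2+4t$ (since $0 < 2+4t < 3+4t$). ✓

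Now $w$ divides $k+1+t$, so since $w \geq 1$ and $k+1+t \geq 1$, we have $w \leq k+1+t$. ✓

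Thus $2+4t \leq k+1+t \Rightarrow 3t \leq k-1 \Rightarrow t \leq (k-1)/3$. ✓

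Great, the proof is straightforward.

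<br>

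The plan is to bound $t$ from above by exploiting the two constraints that $w$ must simultaneously satisfy: the congruence $w \equiv -1 \pmod{3+4t}$ forces $w$ to be reasonably large, while the divisibility $w \mid k+1+t$ forces $w$ to be at most $k+1+t$. Combining these two bounds on $w$ should immediately yield the bound on $t$.

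First I would identify the smallest positive value that $w$ can take. Since $w \equiv -1 \pmod{3+4t}$, the smallest positive residue is $w = (3+4t) - 1 = 2+4t$, because $0 < 2+4t < 3+4t$. Hence every admissible $w$ satisfies $w \geq 2+4t$. Next, since $w$ is a positive divisor of the positive integer $k+1+t$, we certainly have $w \leq k+1+t$. Chaining these gives the key inequality $2+4t \leq k+1+t$.

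Finally I would solve this inequality for $t$. Rearranging $2+4t \leq k+1+t$ yields $3t \leq k-1$, that is, $t \leq \frac{k-1}{3}$; since $t$ is a nonnegative integer this sharpens to $t \leq \lfloor \frac{k-1}{3} \rfloor$, and combined with the standing hypothesis $t \geq 0$ we obtain exactly $t \in [0, \lfloor \frac{k-1}{3} \rfloor]$.

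I do not anticipate any genuine obstacle here; the argument is purely a matter of combining a congruence lower bound with a divisibility upper bound. The only subtle point to state carefully is that $2+4t$ really is the least positive integer in the congruence class of $-1$ modulo $3+4t$, which is clear once one observes that it lies strictly between $0$ and the modulus.
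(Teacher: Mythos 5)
Your proof is correct and uses essentially the same idea as the paper: comparing the least positive integer congruent to $-1 \pmod{3+4t}$, namely $2+4t$, against the upper bound $k+1+t$ on any divisor of $k+1+t$. The only cosmetic difference is that you argue directly ($2+4t \leq w \leq k+1+t$ forces $3t \leq k-1$) while the paper proves the contrapositive (if $t > \frac{k-1}{3}$ then every divisor of $k+1+t$ is too small to lie in the residue class), which is logically the same argument.
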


\begin{proof}
    Suppose that $t>\frac{k-1}{3}$, then $3t>k-1$ and therefore $3+4t>k+2+t$. This then implies that $k+1+t\not\equiv -1 \pmod{3+4t}$ and, in particular, any divisor $w$ of $k+1+t$ is less than $3+4t$, so there is no divisor $w$ of $k+1+t$ such that it is satisfied that $w\equiv -1 \pmod{3+4t}$. We have therefore proved the contrapositive of what we were looking for.
\end{proof}

\begin{remar}
There are many examples for which $t$ takes the maximum value of the interval, e.g. $k=4$. In fact, the bound is always optimal whenever $k\equiv 1 \pmod{3}$, because if we consider $k=3u+1$ with $u\geq0$ we have that, if we take $t=u$, $$k+1+t=3u+1+1+u=2+4u$$ $$3+4t=3+4u$$ and in this case we have that $k+1+t\equiv -1 \pmod{3+4t}$ and therefore we can take $w$ as $k+1+t$.    
\end{remar}

The propositions 3 and 4 shows that there are also parallels between Type A and B solutions when calculating their coordinates for $d$.

\begin{propo}
Let $p$ be an odd prime number, if $p$ has a solution of Type B, $(duv,dup,dvp)$ then $d\leq \lfloor\frac{p+3}{8} \rfloor$. This bound is also optimal.    
\end{propo}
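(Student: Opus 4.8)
The plan is to argue directly from the fundamental equation governing a Type B solution, mirroring the strategy of Propositions 3 and 4 but keeping the argument valid for every odd prime rather than only the case $p=4k+1$ (note that the characterization (\ref{T6}) is restricted to $p=4k+1$, so the general statement is better approached through (\ref{T4})). Given a Type B solution $(duv,dup,dvp)$, the proof of (\ref{T4}) shows that $4duv=p+u+v$. Rearranging this identity to isolate one variable yields $u(4dv-1)=p+v$, and it is precisely this factored form that I would exploit: since $4dv-1\geq 3$, the magnitude of $d$ is controlled by how small the factor $u$ can be forced to be.

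First I would establish the parity observation that supplies the crucial factor of $2$. Reducing $4duv=p+u+v$ modulo $2$ (indeed modulo $4$), the left-hand side is even while $p$ is odd, so $u+v$ must be odd and hence exactly one of $u,v$ is even. Because the solution form $(duv,dup,dvp)$ is symmetric under the interchange $u\leftrightarrow v$ (the denominator $duv$ is fixed and the other two merely swap), I may relabel so that $u$ is the even coordinate, and therefore $u\geq 2$.

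Next I would chain the inequalities. From $u(4dv-1)=p+v$ together with $u\geq 2$ we obtain $2(4dv-1)\leq p+v$, that is $v(8d-1)\leq p+2$. Since $v\geq 1$ and $8d-1>0$ this gives $8d-1\leq p+2$, whence $d\leq\frac{p+3}{8}$, and as $d$ is an integer we conclude $d\leq\lfloor\frac{p+3}{8}\rfloor$. I expect the only genuinely delicate point to be this parity step: without it one recovers merely the weaker bound $d\leq\frac{p+2}{4}$ coming from $u\geq 1$, so the whole argument hinges on verifying that the even coordinate is at least $2$ and that the symmetry of the solution always lets us place it in the role of $u$.

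Finally, for optimality I would exhibit an explicit prime attaining the bound, paralleling the role of $p=13$ in Proposition 3. For $p=13$ one has $\lfloor\frac{p+3}{8}\rfloor=2$, and taking $d=2$, $n=1$ gives $4dn-1=7$ with $13\equiv-1\pmod 7$; by (\ref{T4}) this produces the Type B solution $(4,52,26)$ with $d=2$. Verifying $\frac{1}{4}+\frac{1}{52}+\frac{1}{26}=\frac{4}{13}$ confirms that the bound is met, so it is optimal.
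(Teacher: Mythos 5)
Your proof is correct, and while it starts from the same identity as the paper ($4duv=p+u+v$, obtained by clearing denominators in the Type B equation) and likewise exploits the parity of $p$, the way you extract the bound is genuinely different and somewhat tighter as an argument. The paper writes $d=\frac{p}{4uv}+\frac{1}{4u}+\frac{1}{4v}$, rules out the single pair $u=v=1$ (which would force $p$ even, argued via the construction of Theorem \ref{T4}), and then asserts that all remaining pairs give $d$ at most the value at $\{u,v\}=\{1,2\}$, namely $\frac{p+3}{8}$; this step implicitly relies on the monotonicity of $d$ as a function of $u$ and $v$, which is true but left unstated. You instead prove the stronger parity fact that $u+v$ must be odd, so exactly one of $u,v$ is even, and after using the $u\leftrightarrow v$ symmetry of the solution to place the even coordinate first, you get the bound from the uniform chain $2(4dv-1)\le u(4dv-1)=p+v$, hence $v(8d-1)\le p+2$ and $8d-1\le p+2$. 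This treats every admissible pair at once, needs no monotonicity claim, and never invokes Theorem \ref{T4} for the excluded case. Your optimality step is also more explicit than the paper's: the paper only remarks that pairs with $u=1$, $v=2$ occur for multiple primes and yield $d=\frac{p+3}{8}$, whereas you exhibit $p=13$ with the concrete solution $(4,52,26)$, for which $d=2=\lfloor\frac{13+3}{8}\rfloor$, so the bound is attained. Both proofs are valid for every odd prime; yours is the cleaner write-up of the two.
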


\begin{proof}
    We know that if $p$ has a Type B solution, then the equation $4duv=p+u+v$ is satisfied and therefore $d=\frac{p}{4uv}+\frac{1}{4u}+\frac{1}{4v}$. We know that there are multiple values where $u=1$ and $v=2$ or vice versa (by symmetry, they give rise to the same solution), and for those values we have that $d=\frac{p}{8}+\frac{1}{8}+\frac{1}{4}=\frac{p+3}{8}$. For any larger values of $u$ and $v$ we have therefore that $d\leq \lfloor\frac{p+3}{8} \rfloor$.
\\ \\
On the other hand, if $u=v=1$ then, using the construction of (\ref{T4}), we have that $n=v=1$, $u=\frac{p+n}{4dn-1}=\frac{p+1}{4d-1}=1$ and therefore $4d-1=p+1$. But this then implies that $p$ is even, and by hypothesis $p$ is always odd, so the above bound is correct.
\end{proof}

\begin{propo}
Under the conditions of Proposition 3, if there exists $t\geq0$ and two positive divisors $a,b$ of $k+1+t$ such that $a+b=3+4t$ then $t\in[{0,\lfloor \frac{k-1}{3} \rfloor }]$.    
\end{propo}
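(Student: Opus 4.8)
The plan is to mirror the argument given for the Type A case, but to work with the \emph{product} $ab$ rather than with a single divisor $w$. The hypothesis places me squarely in the setting of Theorem \ref{T6}: there are positive integers $a,b$ with $a+b=3+4t$ whose product divides $k+1+t$. (This is the condition actually produced and consumed in the proof of Theorem \ref{T6}, where one sets $a=n$, $b=3+4t-n$ and checks $ab\mid k+1+t$; it is strictly stronger than the bare statement that $a$ and $b$ each divide $k+1+t$.) So I would first restate the hypothesis in the form $a+b=3+4t$ together with $ab\mid k+1+t$, and note that the lower endpoint $t\geq 0$ is given.

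The heart of the argument is a two-sided estimate on $ab$. On one side, since $a,b\geq 1$ we have $(a-1)(b-1)\geq 0$, which rearranges to $ab\geq a+b-1=2+4t$; this is simply the elementary fact that, among positive integers of fixed sum, the product is smallest at the extreme splitting $\{1,\,a+b-1\}$. On the other side, because $ab$ is a positive divisor of $k+1+t$, we have $ab\leq k+1+t$. Chaining the two inequalities yields $2+4t\leq k+1+t$, that is $3t\leq k-1$; since $t$ is a non-negative integer this gives exactly $t\in\left[0,\lfloor\frac{k-1}{3}\rfloor\right]$.

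The only delicate point is the step $ab\leq k+1+t$, which genuinely needs the divisibility $ab\mid k+1+t$ and not merely that $a$ and $b$ separately divide $k+1+t$: read literally, the pair $a=b=k+1+t$ would spuriously violate the bound. I would resolve this by invoking the precise content of Theorem \ref{T6}, whose proof supplies $ab\mid k+1+t$. If one insists on the weaker literal reading that $a,b$ are divisors of $N:=k+1+t$, the conclusion still follows after a short case split on the larger divisor, say $b$ (taking $a\leq b$): if $b$ is a \emph{proper} divisor then $b\leq N/2$, so $a+b\leq N$ and hence $3+4t\leq k+1+t$ at once; while if $b=N$ then $a=3+4t-N=2+3t-k$, and the identity $3N-a=4k+1=p$ forces $a\mid p$, so by primality of $p$ only $a=1$ survives, giving precisely the boundary value $t=\frac{k-1}{3}$. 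Either route closes the argument. I expect the product estimate, not the divisibility bookkeeping, to carry the real content, and that estimate is entirely routine.
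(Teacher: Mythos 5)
Your core estimate is exactly the paper's. The paper argues the contrapositive via the fact that $\max\{a+b : ab\mid n\}=n+1$, which is precisely your chain $a+b\leq ab+1\leq (k+1+t)+1$ (equivalently $(a-1)(b-1)\geq 0$ plus $ab\leq k+1+t$) run in the opposite direction; both yield $3+4t\leq k+2+t$ and hence $t\leq\lfloor\frac{k-1}{3}\rfloor$. So on the main point you and the paper coincide.

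What you add is genuinely worthwhile, and it is something the paper glosses over: the proposition (like the statement of Theorem \ref{T6}) literally says only that $a$ and $b$ each divide $k+1+t$, whereas the paper's proof silently uses the stronger condition $ab\mid k+1+t$ that the proof of Theorem \ref{T6} actually produces. You flag this mismatch and patch the literal reading by a case split using primality of $p$: if $b=k+1+t$ then $a\mid 3(k+1+t)-a=4k+1=p$, forcing $a=1$ and the boundary value $t=\frac{k-1}{3}$. This distinction is not pedantry; under the literal reading and without primality the statement is false. For instance $k=6$, $t=3$ gives $k+1+t=10$, and $a=5$, $b=10$ are both divisors of $10$ with $a+b=15=3+4t$, yet $t=3>\lfloor\frac{k-1}{3}\rfloor=1$; here $p=4k+1=25$ is composite, which is exactly why your divisibility argument $a\mid p$ does not collapse $a$ to $1$. (Your elimination of the remaining case $a=p$ is stated without detail, but it follows at once from $a\leq b=k+1+t<4k+1$.) So your proof covers both the intended product-divides reading, where it matches the paper, and the literal reading, where the paper's argument as written does not apply.
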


\begin{proof}
    It’s obvious to prove that, given $n \in \mathbb{N}$, $max\{a+b/ab\mid n\}=n+1$. Therefore there can’t be such divisors for $k+1+t$ when $max\{a+b/ab\mid k+1+t\}<3+4t$, and this implies that $(k+1+t)+1<3+4t$, which implies that $3+4t>k+2+t$, the same values for which we couldn’t find a Type A solution, so again $t$ must belong to the interval $[{0,\lfloor \frac{k-1}{3} \rfloor }]$.
\end{proof}

\begin{propo}
Let $a$ be any natural number, then if $a\equiv -n \pmod{4dn-1}$ we also have that $a\equiv -\left(\frac{a+n}{4dn-1}\right) \pmod{\left(\frac{4ad+1}{4dn-1}\right)}$ and the converse is also true. Both congruences lead us, moreover, to the same Type B solution for a.
\end{propo}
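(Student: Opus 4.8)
The plan is to treat the two congruences as two instances of the Type~B characterisation of (\ref{T4}) with the \emph{same} $d$, but with $n$ and the quantity $u:=\frac{a+n}{4dn-1}$ playing symmetric roles, and then to show that passing from one to the other is an involution. First I would record the elementary fact that $4dn\equiv 1\pmod{4dn-1}$, so that the hypothesis $a\equiv -n\pmod{4dn-1}$ yields $4ad\equiv -4dn\equiv -1\pmod{4dn-1}$; hence $4dn-1\mid 4ad+1$ and the second modulus $m:=\frac{4ad+1}{4dn-1}$ is a well-defined natural number. Establishing this integrality is the first thing that must be checked, since the statement writes down $m$ without any a priori guarantee that it is an integer.

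The key observation, and what I expect to be the only genuinely non-obvious step, is that this apparently complicated modulus is in fact exactly $4du-1$. Indeed, with $u=\frac{a+n}{4dn-1}$ a one-line computation gives
\[
4du-1=\frac{4d(a+n)-(4dn-1)}{4dn-1}=\frac{4ad+1}{4dn-1}=m .
\]
Once this identification is made, the target congruence $a\equiv -u\pmod{m}$ is simply the Type~B congruence $a\equiv -u\pmod{4du-1}$ of (\ref{T4}), and it can be verified directly by expanding
\[
a+u=\frac{a(4dn-1)+(a+n)}{4dn-1}=\frac{n(4ad+1)}{4dn-1}=n\,m ,
\]
so that $m\mid a+u$, which is precisely $a\equiv -u\pmod{m}$.

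For the converse I would exploit symmetry: the map sending $(d,n)$ to $(d,u)$ with $u=\frac{a+n}{4dn-1}$ is an involution, because $\frac{a+u}{4du-1}=\frac{nm}{m}=n$ recovers the original $n$. Thus starting instead from $a\equiv -u\pmod{4du-1}$ and repeating the argument verbatim returns $a\equiv -n\pmod{4dn-1}$, giving the reverse implication with no additional work. Finally, to see that both congruences produce the same Type~B solution, I would apply the construction of (\ref{T4}) to each: the first, via $(u_1,v_1)=(u,n)$, yields the triple $(dun,\,dua,\,dna)$, while the second, via $(u_2,v_2)=(n,u)$, yields $(dnu,\,dna,\,dua)$; these are the same unordered triple, so after reordering $x\le y\le z$ they describe one and the same solution for $a$. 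The whole argument is computational once the modulus is recognised as $4du-1$, so the only real subtlety lies in that identification and in phrasing the equivalence as the involution $n\leftrightarrow u$.
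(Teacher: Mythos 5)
Your proposal is correct and follows essentially the same route as the paper's proof: the same key identification $4du-1=\frac{4ad+1}{4dn-1}$ with $u=\frac{a+n}{4dn-1}$, the same computation showing $\frac{a+u}{4du-1}=n$ (which is exactly the paper's verification that the second congruence reproduces the parameter $n$), and the same appeal to Theorem \ref{T4} to see that the two congruences yield one and the same Type B solution with the roles of $u$ and $n$ swapped. If anything, your write-up is slightly tidier than the paper's in making explicit the integrality of the second modulus and the involution $n\leftrightarrow u$, points the paper leaves implicit in its phrase that the operations ``are cyclic.''
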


\begin{proof}
    In the first case, using (\ref{T4}), we arrive at a Type B solution of the form $(duv,dua,dva)$ with $u=\frac{a+n}{4dn-1}$, $v=n$. In the second case we have that $4d\left(\frac{a+n}{4dn-1}\right)-1=\frac{4ad+1}{4dn-1}$, so using the same theorem we arrive at another Type B solution of the form $(duv,dua,dva)$, with $v=\frac{a+n}{4dn-1}$ and $u=n$, because $$u=\frac{a+v}{4dv-1}=\frac{a+\frac{a+n}{4dn-1}}{4d\frac{a+n}{4dn-1}-1}=\frac{4adn-a+a+n}{4dn-1}:\frac{4ad+1}{4dn-1}=\frac{4adn+n}{4dn+1}=n$$ Both solutions are the same except for a translation of their coordinates, and therefore both congruences must be true or false simultaneously. The operations also show that they are cyclic, and by means of one congruence the other can be reconstructed and vice versa.
\end{proof}

This last proposition implies, therefore, that every Type B solution always has at least two congruences associated with it, as we already saw in the case of 2521, which possessed a single Type B solution but two associated congruences, which are $2521\equiv -2 \pmod{87}$ and $2521\equiv -29 \pmod{1275}$. It is a result that can be applied outside the context of the conjecture as a proposition in the field of modular arithmetic.

\begin{propo}
Let $p$ be prime, then $p$ possesses a solution that is both of Type A and B if and only if $p\equiv -1 \pmod{4d-1}$ for some $d \in \mathbb{N}$.  
\end{propo}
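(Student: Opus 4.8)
The plan is to prove both implications, treating ``a solution that is simultaneously of Type A and B'' as a single triple $(x_1,x_2,x_3)$ that admits both structural descriptions at once. The easy direction is $(\Leftarrow)$. Suppose $p\equiv -1\pmod{4d-1}$ and set $b=\frac{p+1}{4d-1}\in\mathbb{N}$, so that $4bd=p+b+1$. I would then exhibit the explicit triple $(bd,\,dp,\,bdp)$ and check by clearing denominators that $\frac{4}{p}=\frac{1}{bd}+\frac{1}{dp}+\frac{1}{bdp}$, which reduces exactly to $4bd=p+b+1$. This same triple is of Type A via the identification $(du,dv,duv)$ with $u=b$, $v=p$, and of Type B via $(duv,dup,dvp)$ with parameters $u=1$, $v=b$ and the same $d$; both readings reproduce $(bd,dp,bdp)$, so the construction is complete.

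For the converse $(\Rightarrow)$, I would start from a triple written both as a Type A solution $(du,dv,duv)$ with $u\le v$ and as a Type B solution $(d'u'v',d'u'p,d'v'p)$ with $u'\le v'$. By Theorem~\ref{T2} (valid for every prime by the remark following it) the Type A reading forces $p\mid v$, so I write $v=pv_0$; since $p$ is prime, a Type II solution has exactly one coordinate coprime to $p$, and that coordinate is $du$ on the Type A side and $d'u'v'$ on the Type B side. Matching these gives $du=d'u'v'$, and matching the two $p$-divisible coordinates (both ordered increasingly) gives $dv_0=d'u'$ and $duv_0=d'v'$. Dividing the last two yields $u=v'/u'$, hence $v'=uu'$.

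The heart of the argument is then a short computation with the two defining equations. From the Type A relation $4duv=p(1+u+v)$ (used already in Theorem~\ref{T2}) together with $v=pv_0$ I obtain $v_0(4du-p)=1+u$, while the Type B relation $4d'u'v'=p+u'+v'$ (from Theorem~\ref{T4}) together with $du=d'u'v'$ gives $4du-p=u'+v'$. Substituting the second into the first produces $v_0(u'+v')=1+u$, and feeding in $v'=uu'$ collapses this to $v_0u'(1+u)=1+u$, i.e. $v_0u'=1$, so $v_0=u'=1$. Setting $u'=1$ in the Type B equation and using $du=d'v'$ leaves $v'(4d'-1)=p+1$, which is precisely $p\equiv -1\pmod{4d'-1}$, with $d'$ playing the role of $d$ in the statement.

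The main obstacle I anticipate is not the final algebra, which is clean once the variables are aligned, but the bookkeeping that licenses the coordinate matching: I must argue carefully that for prime $p$ the Type II structure singles out a \emph{unique} coordinate coprime to $p$ (so the two coprime coordinates genuinely coincide) and that the orderings $u\le v$ and $u'\le v'$ can be imposed without loss of generality, so that the two $p$-divisible coordinates pair up as claimed. Once that identification is secured, the reduction $v_0u'=1$ does all the work, and it is reassuring that the outcome matches the polynomial picture of the preceding remark, where Type A and Type B coincide exactly at $x=1$, i.e. when $P(x,y,t)=Q(x,y,t)$ forces $p\equiv -1\pmod{4y-1}$.
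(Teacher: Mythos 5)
Your proposal is correct, and while its skeleton (identify the two structural readings coordinate-by-coordinate, force the parameters to degenerate, finish with the Type B characterization of Theorem~\ref{T4}) matches the paper's, the central computations in both directions are genuinely different. For the forward direction the paper invokes Theorem~\ref{T4} for the Type B solution and, via $-4d\equiv -1\pmod{4d-1}$, Theorem~\ref{T7} for the Type A one (its citation of Theorem~\ref{T5} there is evidently a typo), asserting without verification that the two solutions are the same triple; your explicit triple $(bd,dp,bdp)$, checked directly against both templates, is more self-contained on this point. For the converse, the paper's engine is multiplicative: writing $(x,y,z)=(du,dv,duv)=(DUV,DUp,DVp)$, it extracts $xy=dz$ and $yz=Dxp^2$, hence $y^2=Ddp^2$, then $DU^2=d$ and $dv^2=Dp^2$, and the two divisibilities $D\mid d$ and $d\mid D$ force $d=D$, $U=1$, $v=p$. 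Your engine is additive: the two defining equations give $v_0(4du-p)=1+u$ and $4du-p=u'+v'$, and with $v'=uu'$ the cancellation $v_0u'(1+u)=1+u$ collapses everything to $v_0=u'=1$ in one stroke. The paper's route delivers the structural facts $d=D$, $v=p$, $U=1$ as explicit conclusions, which is tidy; yours is shorter and uses nothing beyond the two unit-fraction equations once the coordinates are aligned. Finally, the alignment step you flag as the main obstacle (the unique coordinate coprime to $p$ must agree in both readings, and the two $p$-divisible coordinates pair off by size once $u\le v$ and $u'\le v'$ are imposed) is exactly the step the paper performs silently by writing the triple equality with no justification, so your careful bookkeeping actually closes a small gap in the paper's own argument.
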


\begin{proof}
    If $p\equiv -1 \pmod{4d-1}$ then, by (\ref{T4}), it has a Type B solution. Moreover, as $-4d\equiv -1 \pmod{4d-1}$ by (\ref{T5}) that same congruence also generates a Type A solution, which is the one we already had.
\\ \\
Now suppose that p has a solution that is both of Type A and B, i.e., $$(x,y,z)=(du,dv,duv)=(DUV,DUp,DVp)$$ with $d,u,v,D,U,V\in\mathbb{N}$. We can consider without loss of generality that $u<v$. From the Type A structure of the solution we obtain that $xy=dz$, and from the Type B structure we obtain that $yz=Dxp^2$. This implies that $y^2=Ddp^2$. Substituting into the Type B solution we have that $$D^2U^2p^2=Ddp^2$$ and therefore $DU^2=d$, which implies that $U^2=\frac{d}{D}$ and, in particular, that $D\mid d$. Substituting $y^2=Ddp^2$ into the Type A structure we obtain that $$dv^2=Dp^2$$ Since $p\nmid u$  we know that $p\mid v$ and that implies that $\frac{D}{d}=\frac{v^2}{p^2}\in\mathbb{N}$ and $d\mid D$. We conclude that $d=D$ and this automatically implies that $v=p$ and $U=1$. This implies that the solution has the form $(du,dp,dup)$. Since it is of Type B, we have, by (\ref{T4}), that necessarily $p\equiv -1 \pmod{4d-1}$.
\end{proof}

\section{Appendix I: Solutions of Type C?}

As was said in Remark 1 it happens that, if we have the polynomial $R(x,y,t,z)=(4xyz-1)(3+4t)-4x^2y$ that has as its image all the values that possess a Type II solution, the Type A and B solutions are related to it since the Type A solutions are obtained in the particular case in which $x=1$ while the Type B solutions are obtained when $z=1$. This motivates the idea of considering what happens in the case in which $y=1$, for which we obtain the polynomial $R(x,1,t,z)=(4xz-1)(3+4t)-4x^2$. We will study it in the following theorem that encompasses this new type of solutions.

\begin{theor}
\label{T9}
Let $n \in \mathbb{N}$ be of the form $n=4k+1$. They are equivalent:

\begin{enumerate}[label=(\roman*)]
    \item There exists $(x_0,z_0,t_0)$ such that $n=R(x_0,1,t_0,z_0)$;
    \item There exist $d,m\in\mathbb{N}$ such that $-n\equiv 4d^2 \pmod{4dm-1}$;
    \item n has a solution of the Erdos-Straus conjecture with the form $(uv,uwn,vwn)$, with $u,v,w\in\mathbb{N}$;
    \item For a certain $t\geq0$ exists $a,b\in\mathbb{N}$ such that $ab=k+1+t$, $3+4t\mid a+b$.
\end{enumerate}

\end{theor}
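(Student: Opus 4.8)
The plan is to use statement (iv) as a hub and prove the three equivalences (i) $\Leftrightarrow$ (iv), (ii) $\Leftrightarrow$ (iv) and (ii) $\Leftrightarrow$ (iii); the guiding observation, which I would make explicit along the way, is that all four conditions collapse to the single congruence $k+dm+d^2\equiv 0 \pmod{4dm-1}$, equivalently to the factorization $k+1+t=d\bigl(m(3+4t)-d\bigr)$. This is what makes the four statements ``the same fact seen four ways,'' and organizing the proof around it avoids four independent arguments.

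First I would settle (i) $\Leftrightarrow$ (iv) by direct expansion. Writing $n=4k+1$ and expanding $R(x_0,1,t_0,z_0)=(4x_0z_0-1)(3+4t_0)-4x_0^2$ gives, after collecting the factor $4$, the identity $k+1+t_0=x_0\bigl(z_0(3+4t_0)-x_0\bigr)$. Setting $a=x_0$ and $b=z_0(3+4t_0)-x_0$ yields $ab=k+1+t_0$ and $a+b=z_0(3+4t_0)$, so $3+4t_0\mid a+b$, which is precisely (iv); conversely, given (iv) I would put $z_0=(a+b)/(3+4t)$ (a positive integer because $a+b$ is a positive multiple of $3+4t$) and $x_0=a$, and run the computation backwards. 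Next, (ii) $\Leftrightarrow$ (iv) mirrors the proof of (\ref{T6}): starting from $-n\equiv 4d^2 \pmod{4dm-1}$ and using $4dm\equiv 1$ (so that $dm$ is the inverse of $4$), I would reduce the congruence to $k+dm+d^2\equiv 0 \pmod{4dm-1}$, write $k+dm+d^2=c(4dm-1)$ with $c\in\mathbb{N}$, set $c=t+1$, and rearrange to $k+1+t=d\bigl(m(3+4t)-d\bigr)$; then $a=d$, $b=m(3+4t)-d$ deliver (iv), and the steps reverse verbatim.

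Finally, for (ii) $\Leftrightarrow$ (iii) I would first turn the solution form into an equation: clearing denominators in $\frac{4}{n}=\frac{1}{uv}+\frac{1}{uwn}+\frac{1}{vwn}$ produces $4uvw=wn+u+v$, so solving for $v$ shows that (iii) holds iff there exist $u,w\in\mathbb{N}$ with $4uw-1\mid wn+u$, i.e. $wn\equiv -u \pmod{4uw-1}$. Substituting $n=4k+1$ and multiplying by $u$ (which is the inverse of $4w$ modulo $4uw-1$, since $4uw\equiv 1$) collapses this to $k+uw+u^2\equiv 0 \pmod{4uw-1}$, which is literally the congruence produced in the previous paragraph under the identification $(d,m)=(u,w)$; this identifies (iii) with (ii). For the converse construction I would set $u=d$, $w=m$ and $v=(mn+d)/(4dm-1)$, checking from (ii) that $mn+d\equiv 0 \pmod{4dm-1}$ (indeed $mn\equiv -d$ follows on multiplying $n\equiv -4d^2$ by $m$), so that $v$ is a genuine positive integer and $(uv,uwn,vwn)$ solves the equation.

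The routine algebra of the reductions is not where the difficulty lies; the part demanding care is the bookkeeping that keeps every constructed quantity in its admissible range and makes each implication reversible: that $c\ge 1$ corresponds to $t\ge 0$, that $z_0$ and $m$ are genuine naturals because $a+b$ is the minimal positive multiple of $3+4t$, and above all that the $v$ produced in the converse of (ii) $\Leftrightarrow$ (iii) satisfies $v\ge 1$ (this is automatic, since $wn+u>0$ and $4uw-1>0$ force $v\ge 1$ once integrality is known). Verifying that these positivity constraints survive in both directions, exactly as in (\ref{T6}), is the main obstacle I would expect to spend effort on.
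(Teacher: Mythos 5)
Your proposal is correct, and all of its computational content coincides with the paper's: the same three moves appear there, namely multiplying congruences by $m$, $dm$ or $u$ to exploit $4dm\equiv 1 \pmod{4dm-1}$, trading the congruence $k+dm+d^2\equiv 0 \pmod{4dm-1}$ for the factorization $k+1+t=d\left(m(3+4t)-d\right)$ via $c=t+1$, and the expansion identifying that factorization with $n=R(x,1,t,z)$. The difference is architectural. The paper proves the single cycle (i) $\Rightarrow$ (ii) $\Rightarrow$ (iii) $\Rightarrow$ (iv) $\Rightarrow$ (i), four one-way implications, whereas you prove three two-way equivalences routed through (iv) and through your ``hub'' congruence. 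Your organization makes explicit something the paper leaves buried inside its (iii) $\Rightarrow$ (iv) step, where the hub congruence surfaces only as $k\equiv -u^2-uw \pmod{4uw-1}$: namely that all four statements are literally one congruence/factorization in different clothing. The cost is mild redundancy, six implications instead of four, with (ii) $\Leftrightarrow$ (iii) partly repeating algebra already done in (ii) $\Leftrightarrow$ (iv); the benefit is that every step is locally reversible, so the positivity and integrality bookkeeping you flag at the end ($c\geq 1$, $z_0\in\mathbb{N}$, $b>0$, $v\geq 1$) is transparent rather than distributed around a cycle. One small bonus: your reduction correctly yields $4wk\equiv -u-w \pmod{4uw-1}$, whereas the paper's corresponding line contains a typo ($-u-v$ in place of $-u-w$), which your route never risks because the congruence is re-derived rather than copied.
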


We will consider this new solution as a Type C solution.

\begin{proof}
    First we will see that (i) implies (ii). We know that $n$ can be written as $n=(4x_0z_0-1)(3+4t_0)-4x_0^2$, then it is automatically satisfied that $$n\equiv -4x_0^2 \pmod{4x_0z_0-1}$$ and so we have (ii) automatically by identifying $d=x_0$, $m=z_0$.
\\ \\
To show that (ii) implies (iii) we know that $-n\equiv 4d^2 \pmod{4dm-1}$ and therefore $-nm\equiv d \pmod{4dm-1}$. This implies that $d+nm=v(4dm-1)$ for a certain $v\in\mathbb{N}$, so we have that $4dmv=d+nm+v$. Dividing by $dmvn$ we have that $$\frac{4}{n}=\frac{1}{mvn}+\frac{1}{dv}+\frac{1}{dmn}$$ which is of the form requested with $u=d$, $w=m$.
\\ \\
Now we start from (iii) and assume that we have a solution of the form $(uv,uwn,vwn)$, so multiplying by $dmvn$ we have again that $4uwv=u+nw+v$. This implies that $v(4uw-1)=u+nw$ and therefore that $nw\equiv -u \pmod{4uw-1}$. If we substitute $n=4k+1$ this leads us to that $$4wk\equiv -u-v \pmod{4uw-1}$$ and therefore that $$k\equiv -u^2-uw \pmod{4uw-1}$$ Translating the congruence to an equality leads to the fact that $k+u^2+uw=d(4uw-1)$ for a certain $d\in\mathbb{N}$ and hence $$k+d=4duw-uw-u^2$$ which can be rewritten as $$k+d=u(w(4d-1)-u)$$ Since we know that $d$ is a natural number we can make the change of variable $d=t+1$, $t\geq0$ and we have then that the equality can be written as $$k+1+t=u(w(3+4t)-u)$$  This implies that $k+1+t$ can be decomposed as the product of two divisors $a=u$, $b=w(3+4t)-u$, such that $a+b=w(3+4t)$ and therefore $3+4t\mid a+b$, as we were looking for.
\\ \\
To prove finally that (iv) implies (i), we assume that we have two divisors $a,b$ of $k+1+t$ for a certain $t\geq0$ such that $3+4t\mid a+b$. Undoing the same math as above we have that $$k+1+t=u(w(3+4t)-u)$$ and therefore that $$k+1+t=3uw+4uwt-u^2$$ This leads us to the fact that $$4k+4+4t=12uw+16uwt-4u^2$$ and therefore $$4k+1=12uw+16uwt-(3+4t)-4u^2$$ which by rearranging the terms leaves us with the expression $$n=(4uw-1)(3+4t)-4u^2$$ which shows that $n=R(u,1,t,w)$ and therefore that $n$ belongs to the image of the reduced polynomial $n=R(x,1,t,z)$, which was just what was said in section (i).
\end{proof}

\begin{corol}
Given the nature as a quadratic equation of the congruence $-n\equiv 4d^2 \pmod{4dm-1}$ and taking into account the notation of Legendre's symbol and that it always happens that $$\left(\frac{-1}{4u-1}\right)=-1$$ we have then that a necessary condition for a number $n$ of the form $n=4k+1$ to have a solution of Type C will be that there exists a certain $u$ for which $n$ is not a quadratic residue in $\mathbb{Z}_{4u-1}$. This will imply that $-n$ will possess not only one but two solutions to the equation $-n\equiv x^2 \pmod{4u-1}$. If we denote by $x_0$ and $4u-1-x_0$ those solutions we have that exactly one of them will be even and the other odd and, therefore, either $\frac{x_0}{2}$ or $\frac{4u-1-x_0}{2}$  will be a natural number. Then it will be an indispensable condition, for a solution of Type C to exist, that this number be a divisor of $u$.    
\end{corol}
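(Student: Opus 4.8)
The plan is to deduce both stated necessary conditions directly from condition (ii) of Theorem~\ref{T9}, reading everything modulo the single integer $4u-1$ with $u=dm$. First I would record that the modulus lies in the class $3\pmod4$: since $u=dm$, we have $4u-1=4dm-1\equiv-1\pmod4$. The congruence $-n\equiv 4d^2\pmod{4u-1}$ can then be written as $-n\equiv(2d)^2\pmod{4u-1}$, which already exhibits $-n$ as a quadratic residue with explicit square root $2d$. Thus the relevant $u$ promised in the statement is simply $u=dm$, furnished by the solution via Theorem~\ref{T9}.

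To obtain the first condition (that $n$ is a non-residue modulo $4u-1$) I would run a Jacobi-symbol computation. Note $\gcd(d,4dm-1)=1$, since any common factor divides $1$; hence $\gcd(2d,4u-1)=1$ and $n\equiv-(2d)^2$ is invertible. Using multiplicativity together with the identity $\left(\frac{-1}{4u-1}\right)=-1$ (which holds because $(4u-1-1)/2=2u-1$ is odd), one gets $1=\left(\frac{-n}{4u-1}\right)=\left(\frac{-1}{4u-1}\right)\left(\frac{n}{4u-1}\right)=-\left(\frac{n}{4u-1}\right)$, so $\left(\frac{n}{4u-1}\right)=-1$ and $n$ cannot be a quadratic residue. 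This step remains valid even for composite $4u-1$, because a Jacobi symbol equal to $-1$ always certifies non-residuacity.

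For the parity and divisibility claim I would argue as follows. The two roots $x_0$ and $4u-1-x_0$ sum to the odd number $4u-1$, so they have opposite parity and exactly one of them is even; halving that one yields a natural number. Identifying the pair with $\{2d,\,4u-1-2d\}$ (legitimate because $0<2d<4u-1$, as $u=dm\geq d$ forces $4u-1\geq 4d-1>2d$), the even representative is precisely $2d$, whose half is $d$; and since $u=dm$, we have $d\mid u$. Hence the half of the even root divides $u$, which is the indispensable condition asserted.

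The main obstacle is the composite-modulus case. When $4u-1$ is prime the congruence $-n\equiv x^2$ has exactly the two roots $\pm 2d$, so ``the even root'' is unambiguous and the chain above is airtight. When $4u-1$ is composite, $-n$ may admit further square roots, and one must ensure that the even representative whose half is required to divide $u$ is the one coming from the actual solution, namely $2d$, rather than a spurious even root $2d'$ with $d'\nmid u$. I would therefore state the divisibility conclusion as a property of the specific root pair $\{2d,\,4u-1-2d\}$ produced by the Type C solution, and observe that, used as a search criterion, it is applied by testing each even square root in turn.
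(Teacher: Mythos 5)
Your proof is correct and follows exactly the route the paper intends: the corollary is stated without a separate proof, as an immediate consequence of condition (ii) of Theorem~\ref{T9}, which is precisely what you formalize by setting $u=dm$, reading the congruence as $-n\equiv(2d)^2\pmod{4u-1}$, and combining $\left(\frac{-1}{4u-1}\right)=-1$ with multiplicativity of the Jacobi symbol to conclude $\left(\frac{n}{4u-1}\right)=-1$ and hence non-residuacity, after which the even root $2d$ halves to $d\mid u$. Your closing observation --- that for composite $4u-1$ the square roots of $-n$ need not form a single pair, so the divisibility condition must be attached to the specific pair $\{2d,\,4u-1-2d\}$ produced by the solution --- is a genuine sharpening of the paper's loose phrasing, which tacitly treats the roots as unique up to sign.
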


Although this type of solution does not seem  necessary in experimental terms to cover all the prime values congruent to 1 modulo 4, it can be added to the conjecture we already have, by extending the system of congruences without any cost:

\begin{conje}
Let be $p \in \mathbb{N}$ prime, then exists $d,n \in \mathbb{N}$ such that $p\equiv -4d \pmod{4dn-1}$, $p\equiv -n \pmod{4dn-1}$ or $p\equiv -4d^2 \pmod{4dn-1}$. If this result is true, then this congruence system covers all primes and the Erdos-Straus conjecture is true.    
\end{conje}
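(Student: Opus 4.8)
The statement has two distinct parts: the unconditional assertion that every prime $p$ satisfies at least one of the three congruences, which is the genuinely open conjecture, and the conditional claim that such coverage would force the Erdos-Straus conjecture. The plan is to give a complete proof of the conditional implication, treating the coverage claim itself as the obstacle that remains open.

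First I would translate each congruence into an explicit solution by invoking the characterizations already established. By (\ref{T7}), the congruence $p\equiv -4d \pmod{4dn-1}$ is equivalent to $p$ admitting a Type A solution; by (\ref{T4}), $p\equiv -n \pmod{4dn-1}$ is equivalent to a Type B solution; and by the equivalence of (ii) and (iii) in (\ref{T9}), $p\equiv -4d^2 \pmod{4dn-1}$ is equivalent to a Type C solution. In each of the three cases the corresponding type is, by definition, an explicit triple $(x,y,z)$ of positive integers with $\frac{4}{p}=\frac{1}{x}+\frac{1}{y}+\frac{1}{z}$. Hence, under the coverage hypothesis, every prime is Egyptian of order 3.

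Next I would dispose of the form restrictions and the composite case. Since (\ref{T9}) characterizes Type C only for $p\equiv 1 \pmod 4$, the primes $p\equiv 3 \pmod 4$ must be covered by Types A and B alone; here I would lean on the remark following (\ref{T2}) that the Type II characterization transfers verbatim to $p=4k+3$, together with the classical Mordell identities, so that the new system is genuinely needed only in the residue class $p\equiv 1 \pmod 4$ (and in particular the stubborn subclass $p\equiv 1 \pmod{24}$). The primes $2$ and $3$ are handled by direct exhibition. Finally, to pass from primes to all $n\ge 2$ I would use the standard scaling argument: writing a composite $n=mp$ with $p$ a prime factor and $\frac{4}{p}=\frac{1}{x}+\frac{1}{y}+\frac{1}{z}$, one obtains $\frac{4}{n}=\frac{1}{mx}+\frac{1}{my}+\frac{1}{mz}$, so $n$ inherits a solution from any prime divisor.

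The hard part, and the one I do not expect to resolve, is the coverage claim itself. By the structure theory developed above it reduces to a purely arithmetic covering problem: one must show that, as $(d,n)$ ranges over $\mathbb{N}^2$, the residues $-4d$, $-n$ and $-4d^2$ modulo $4dn-1$ together meet every prime. The obstruction is exactly the one Mordell identified, namely that no single polynomial family can cover a quadratic-residue class, and the interplay noted in the text, with the $-4d$ and $-n$ values being mutual inverses in $\mathbb{Z}_{4dn-1}$ and $-4d^2$ supplying the quadratic layer, is what one would have to exploit. Proving that these three interlocking families leave no prime uncovered appears to require input beyond the identities and congruences assembled here, which is why the statement is offered as a conjecture supported only by verification through the first $10000$ primes.
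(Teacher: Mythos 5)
Your proposal is correct and matches the paper's actual position: the statement is offered as a conjecture, and the paper contains no proof of the coverage claim beyond experimental verification for the first ten thousand primes, so the only provable content is the conditional implication you isolate. Your proof of that implication --- each congruence yields an explicit solution via (\ref{T7}), (\ref{T4}), and the (ii)$\Rightarrow$(iii) direction of (\ref{T9}), with small primes checked directly and composites handled by scaling a solution of a prime factor --- is precisely the justification the paper's theorems implicitly supply, so your treatment is essentially the same as the paper's.
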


Another immediate implication, thanks to the fact that (\ref{T4}) is true for all natural numbers, not just prime numbers, is that there exist infinite pairs of values of the form $4k+1$ such that their distance is equal to $4s$, with $s\geq 1$ chosen at our option and satisfying the conjecture. Indeed, if we call $$S(x,y,t)=R(x,1,t,y)=(4xy-1)(3+4t)-4x^2$$ then we have that $S(x,y,t)-P(x,y,t)=4x^2(y-1)$, and given $s\in\mathbb{N}$, if we define $n=S(1,s+1,t)$ and $n'=P(1,s+1,t)$ then it is immediate to check that $n-n'=4s$ for every value $t\geq 0$ and we have made the infinite pairs requested, which satisfy the Erdos-Straus conjecture by being image of the polynomials mentioned above.

\section{Appendix II: Reduction to Egyptian Numbers of Order 2}

Although the values that have the form $n=4k+1$ turn out to be the most problematic to study, we can also take advantage of their particular structure to reach results that only they can fulfill, like this theorem that proves the existence of infinite consecutive values that satisfy the Erdos-Straus Conjecture, something that Manuel Bello Hernández, Manuel Benito and Emilio Fernández already proved in \cite{bello-hernandez_egyptian_2012}, but that is shown here in a much simpler and direct way and by means of an ingenious transformation.

\begin{theor}
\label{T10}
Given $n\in\mathbb{N}$, there exists a chain of $n$ consecutive natural numbers that satisfies the Erdos-Straus conjecture.    
\end{theor}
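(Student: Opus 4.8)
The plan is to sidestep the hard primes entirely and rely on the fact that Egyptianity of order $3$ is inherited by multiples: if $d \mid m$ and $\frac{4}{d} = \frac{1}{x} + \frac{1}{y} + \frac{1}{z}$, then writing $m = dc$ yields $\frac{4}{m} = \frac{1}{cx} + \frac{1}{cy} + \frac{1}{cz}$, so $m$ is Egyptian of order $3$ as well. Hence it suffices to arrange that each of $n$ consecutive integers has at least one divisor already known to be Egyptian. The divisors I would use come from the ``order $2$'' reduction that names this appendix: for any $m \equiv 2 \pmod{3}$ one peels off $\frac{1}{m}$ and expresses the remainder $\frac{3}{m}$ as the two unit fractions $\frac{1}{(m+1)/3} + \frac{1}{m(m+1)/3}$, the denominators being integers precisely because $3 \mid m+1$. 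Thus every $m \equiv 2 \pmod{3}$ is Egyptian of order $3$ unconditionally.

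The transformation itself is then a Chinese Remainder construction. I would fix $n$ pairwise coprime integers $q_0, q_1, \dots, q_{n-1}$, each congruent to $2$ modulo $3$; such a family exists, for instance the primes $\equiv 2 \pmod{3}$, or one built greedily by a residue argument using that $3$ is coprime to each $q_i$, and by the previous step each $q_i$ is Egyptian. Because the $q_i$ are pairwise coprime, the system $N \equiv -i \pmod{q_i}$ for $i = 0, 1, \dots, n-1$ has a solution, and we may take $N$ positive by adding a suitable multiple of $q_0 q_1 \cdots q_{n-1}$. Then $q_i \mid N + i$ for every $i$, so the inheritance observation makes each of $N, N+1, \dots, N + n - 1$ Egyptian of order $3$, which is exactly a chain of $n$ consecutive natural numbers satisfying the conjecture.

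The only delicate point, and the real crux of the construction, is the choice of moduli: they must be simultaneously pairwise coprime, so that the Chinese Remainder Theorem applies, and unconditionally Egyptian, so that the inherited solutions do not themselves depend on Erdos-Straus. Confining the moduli to the residue class $2 \pmod{3}$ secures both demands at once, and this is the entire content of the reduction to order $2$: once a number is forced to carry such a divisor, its $\frac{4}{\,\cdot\,}$ collapses into a single unit fraction plus a two-term tail. Everything else is routine bookkeeping, so the argument delivers arbitrarily long runs of consecutive Egyptian numbers directly, without ever deciding the conjecture for a single hard prime.
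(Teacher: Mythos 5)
Your proof is correct, but it takes a genuinely different route from the paper's. You never invoke the appendix's reduction to Egyptian numbers of order 2: instead you combine three elementary ingredients --- the classical identity $\frac{4}{m}=\frac{1}{m}+\frac{1}{(m+1)/3}+\frac{1}{m(m+1)/3}$, valid for every $m\equiv 2\pmod 3$; the inheritance of solutions by multiples; and the Chinese Remainder Theorem applied to pairwise coprime moduli $q_0,\dots,q_{n-1}$ all congruent to $2$ modulo $3$. (Such a family indeed exists and needs no appeal to Dirichlet: take greedily $q_i=3q_0q_1\cdots q_{i-1}-1$, which is $\equiv 2\pmod 3$ and $\equiv -1$ modulo every earlier $q_j$, hence coprime to all of them.) The paper instead first reduces to the residue class $1\pmod 4$ using known identities for even numbers and for $4k+3$, then proves the order-2 reduction lemma (if $\frac{4d-1}{k+d}$ is Egyptian of order 2 then $4k+1$ satisfies the conjecture), and finally manufactures the run explicitly near $4\cdot n!$ by taking $k\in[n!-\lfloor n/2\rfloor,\,n!-1]$, $d=f(n)$ so that $k+d=n!$, and the divisors $u=n$, $v=n-1$ of $n!$ with $u+v=4f(n)-1$. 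Your argument buys brevity and self-containment: it treats all residues uniformly, requires nothing beyond the introduction's mod-3 identity, and is essentially the clean standard CRT device for producing arbitrarily long runs. The paper's argument buys thematic relevance --- Theorem 10 is precisely the showcase for the order-2 reduction that Appendix II introduces --- and it produces an explicit interval of consecutive values each of which receives a solution through that new mechanism directly, rather than inheriting one from a small prescribed divisor.
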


\begin{proof}
    Since identities are known for even values and of the form $n=4k+3$, it will suffice to prove this for those of the form $n=4k+1$. First we will show that, if there exists $d\in\mathbb{N}$ such that $\frac{4d-1}{k+d}$ is Egyptian of order 2, then $n=4k+1$ satisfies the Erdos-Straus conjecture.
\\ \\
If $\frac{4d-1}{k+d}=\frac{1}{y}+\frac{1}{z}$ then $\frac{4d}{k+d}=\frac{1}{k+d}+\frac{1}{y}+\frac{1}{z}$. We call $k+d=x$, and therefore $d=x-k$ and it follows that $$\frac{4(x-k)}{x}=\frac{1}{x}+\frac{1}{y}+\frac{1}{z}$$ which implies that $$4=\frac{4k+1}{x}+\frac{1}{y}+\frac{1}{z}$$ We call $n=4k+1$ and we have that $$\frac{4}{n}=\frac{1}{x}+\frac{1}{ny}+\frac{1}{nz}$$ which was what we were looking for.
\\ \\
Now, it is well known that $\frac{a}{b}$ is Egyptian of order 2 if and only if there exist $u,v$ divisors of $b$ such that $a$ divides to $u+v$.  Applied to $\frac{4d-1}{k+d}$ we have that there exist $u,v$ divisors of $k+d$ such that $4d-1$ divides to $u+v$. Note that if $k$ is odd it is automatically fulfilled by taking $d=u=1$, $v=2$.
\\ \\
We consider $k$ of the form $n!-f(n)$, and take $d=f(n)$. For this form to be satisfactory there must exist $u,v$ divisors of $n!$ such that $4f(n)-1\mid u+v$. We look for $f(n)$ to be as large as possible, and that is the case if it is also $u+v$, with $u\neq v$. We take therefore $u+v=n+(n-1)$. Equaling we have that $$4f(n)-1=2n-1$$ And therefore these values $u,v$ will always be possible and valid if $f(n)\leq \lfloor \frac{n}{2} \rfloor$, which implies that all values $4k+1$, with $k$ belonging to the interval $[{n!-\lfloor \frac{n}{2}\rfloor,n!-1}]$, satisfy the Erdos-Straus conjecture. Since we can take $n$ as large as we want and the length of the interval increases with $n$, we can find consecutive values to our liking that satisfy this conjecture.
\end{proof}

\begin{corol}
Proving that there exists $d\in\mathbb{N}$ such that $\frac{4d-1}{k+d}$ is Egyptian of order 2 is equivalent to finding a Type II solution for $4k+1$.
\end{corol}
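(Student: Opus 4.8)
The plan is to prove the two implications by exhibiting an explicit, reversible dictionary between Type II solutions of $4k+1$ and representations of $\frac{4d-1}{k+d}$ as a sum of two unit fractions, the bridge being the substitution $x=k+d$ (equivalently $d=x-k$). Writing $n=4k+1$, the basic algebraic observation I would record first is that $4(k+d)-(4k+1)=4d-1$, so that for any positive integer $x$ one has $\frac{4x-n}{x}=\frac{4d-1}{k+d}$ whenever $x=k+d$. Both directions of the corollary will be read off from this identity once the denominators have been cleared.

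For the implication from a Type II solution to the Egyptian representation, I would start from a Type II solution, which by the Type II definition has the shape $(x,ny',nz')$ with $\gcd(n,x)=1$ and $y',z'\in\mathbb{N}$. Substituting into $\frac 4n=\frac 1x+\frac{1}{ny'}+\frac{1}{nz'}$ and multiplying by $n$ collapses the last two terms, giving $\frac{4x-n}{x}=\frac{1}{y'}+\frac{1}{z'}$, so $\frac{4x-n}{x}$ is Egyptian of order $2$. It then remains to set $d=x-k$ and to check $d\in\mathbb{N}$: since the three unit fractions are positive we have $\frac 1x<\frac 4n$, hence $x>\frac n4=k+\frac14$ and therefore $x\ge k+1$, i.e. $d\ge 1$. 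The identity above turns $\frac{4x-n}{x}$ into $\frac{4d-1}{k+d}$, completing this direction.

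For the converse I would run the construction already used in the proof of (\ref{T10}): given $d$ with $\frac{4d-1}{k+d}=\frac 1y+\frac 1z$, the number $n=4k+1$ satisfies $\frac 4n=\frac 1x+\frac{1}{ny}+\frac{1}{nz}$ with $x=k+d$, a genuine Erdos--Straus solution whose last two coordinates are multiples of $n$. The only thing left is to verify that this solution is of Type II, i.e. that the remaining coordinate satisfies $\gcd(n,x)=1$. This coprimality step is the heart of the matter and the place I expect the real difficulty. I would handle it through a size bound: because $\frac 1y+\frac 1z\le 2$ one gets $4d-1\le 2(k+d)$, hence $d\le k$ and so $x=k+d\le 2k<4k+1=n$; when $n$ is prime, $0<x<n$ forces $\gcd(n,x)=1$ immediately, and the solution is Type II as required.

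The main obstacle is precisely this last coprimality check, and it is genuinely delicate for composite $n=4k+1$: there $\gcd(n,k+d)=\gcd(4d-1,k+d)$ can exceed $1$ (the fraction $\frac{4d-1}{k+d}$ failing to be in lowest terms), in which case the constructed solution is neither Type I nor Type II. I would therefore either restrict the statement to the prime case that drives the conjecture --- where the bound $x<n$ settles everything --- or, for composite $n$, strengthen the argument by selecting a witness $d$ for which $\frac{4d-1}{k+d}$ is already reduced, so that $\gcd(n,x)=1$ is restored. For the applications in this paper the prime case suffices, and there the equivalence follows cleanly from the reversible dictionary together with the elementary inequality $x<n$.
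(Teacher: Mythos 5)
Your argument is correct for prime $4k+1$ and follows a genuinely different route from the paper's. The paper only writes out the implication from the Egyptian representation to the Type II solution: it invokes the divisor criterion (there exist $u,v\mid k+d$ with $4d-1\mid u+v$), sets $k+d=uvw$, and massages the resulting identity into the form $n=R(x,y,z,s)$ so that Theorem \ref{T8} (Mordell's characterization) supplies the Type II solution, leaving the converse to the reversibility of those manipulations. You instead work directly from the definitions: clearing denominators in a Type II solution $(x,ny',nz')$ yields $\frac{4x-n}{x}=\frac{1}{y'}+\frac{1}{z'}$ with $d=x-k\geq 1$, and conversely the construction of Theorem \ref{T10} together with the bound $x=k+d\leq 2k<n$ settles coprimality when $n$ is prime. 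Your route is more elementary --- it bypasses both the divisor criterion and Theorem \ref{T8} --- it proves both implications explicitly, and it puts the coprimality requirement in plain view rather than leaving it implicit.

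The difficulty you flag is, moreover, a genuine defect of the statement and of the paper's proof, not of your approach. For composite $4k+1$ the claimed equivalence is false: take $n=25$, $k=6$, $d=4$, so that $\frac{4d-1}{k+d}=\frac{15}{10}=\frac{1}{1}+\frac{1}{2}$ is Egyptian of order 2, while $25$, being a perfect square, has no Type II solution at all (a fact the paper itself recalls from Schinzel and Yamamoto). The constructed triple $(10,25,50)$ satisfies $\gcd(25,10)=5$, so it is neither of Type I nor of Type II. The paper's proof fails exactly where you predicted: for $\frac{15}{10}$ the only divisors $u,v$ of $10$ with $15\mid u+v$ are $u=5$, $v=10$, and $uv=50$ does not divide $10$, so the step ``therefore $k+d=uvw$'' is unjustified. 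Since $\gcd(4d-1,k+d)=\gcd(4d-1,4k+1)$ (because $4(k+d)=(4k+1)+(4d-1)$ and $4d-1$ is odd), the fraction is automatically in lowest terms when $n$ is prime --- the representation forces $4d-1\leq 4k-1<n$ --- but not otherwise; note also that your fallback of choosing a reduced witness $d$ cannot succeed for $n=25$, since a reduced witness would manufacture a Type II solution that cannot exist. The restriction to primes that you propose is therefore not a convenience but a necessity, and with it your proof is complete.
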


\begin{proof}
If $\frac{4d-1}{k+d}=\frac{1}{y}+\frac{1}{z}$, then there exists $u,v\mid k+d$, $4d-1\mid u+v$, and therefore $k+d=uvw$, $u+v=t(4d-1)$ for some certain naturals $w,t$. By elementary calculations and calling $n=4k+1$ we obtain that $$n=4uvw-\frac{u+v}{t}$$ Renaming $u=ct-v$, we have that $n=c(4vwt-1)-4v^2w$. By changing the names of the variables, $n=(4xyz-1)c-4x^2y$. Since $n$ is congruent with 1 modulo 4, then $c$ is congruent with 3 modulo 4, and so $c=3+4s$, $s\geq 0$, and $n=R(x,y,z,s)$. By (\ref{T8}), $n$ is then a value with a Type II solution.
\end{proof}

\section{Conclusion}

We have defined two types of solutions, Type A and B, which have allowed us to propose a system of congruences that we conjecture gives a solution of the Erdos-Straus conjecture to all prime numbers. This is one of the most reasonable techniques for solving the Erdos-Straus conjecture: to obtain a system of congruences such that they all have an associated polynomial identity and that they completely cover all prime numbers. The first part, that every congruence has an associated polynomial identity and therefore none of them contain quadratic residues, has already been shown. It remains to be seen whether counterexamples to this new conjecture are found, which is the same as finding some prime number that lacks solutions of both Type A and Type B and, if not found and therefore suspecting that the conjecture is true, whether or not we have created an alternative formulation of the same problem as difficult to prove or more difficult to prove than the original problem. So far we have experimentally proved that our proposed system of congruences gives a solution to the first ten thousand prime numbers without exceptions, which gives us hope for its usefulness as a tool to solve the Erdos-Straus conjecture.
\\ \\
\bibliographystyle{unsrt}

\end{document}